\newtheoremstyle{miestilo}{\baselineskip}{3pt}{\itshape}{}{\bfseries}{}{.5em}{}
\newtheoremstyle{miobs}{\baselineskip}{3pt}{}{}{\bfseries}{}{.5em}{}
\theoremstyle{miestilo}
\newtheorem{defn}{Definition}
\newtheorem{teo}[defn]{Theorem}
\newtheorem{cor}[defn]{Corollary}
\newtheorem{lema}[defn]{Lemma}
\newtheorem{prop}[defn]{Proposition}
\theoremstyle{miobs}
\newtheorem*{obs}{Remark}
\newcommand{\N}{\mathbb{N}}
\newcommand{\R}{\mathbb{R}}
\newcommand{\Z}{\mathbb{Z}}
\let\epsilon\varepsilon
\DeclareMathOperator*{\esssup}{ess\,sup}
\newcommand{\vertiii}[1]{{\left\vert\kern-0.25ex\left\vert\kern-0.25ex\left\vert #1 \right\vert\kern-0.25ex\right\vert\kern-0.25ex\right\vert}}
\begin{document}

\nocite{*}
\title{On an identification of the Lipschitz-free spaces over subsets of $\R^{n}$}
\author{Gonzalo Flores}
\date{March 2017}
\address{Universidad de Chile, Facultad de Ciencias F\'isicas y Matem\'aticas, Departamento de Ingenier\'ia Matem\'atica, Beauchef 851, Santiago, Chile.}
\thanks{The author was supported by the research grants FONDECYT 1130176 and ECOS-CONICYT C14E06.}
\email{gflores@dim.uchile.cl}
\maketitle

\begin{abstract}

In this short note, we develop a method for identifying the spaces $Lip_{0}(U)$ for every nonempty open convex $U$ of $\R^{n}$ and $n\in\N$. Moreover, we show that $\mathcal{F}(U)$ is identified with a quotient of $L^{1}(U;\R^{n})$.
\end{abstract}

\section{Introduction}

Lipschitz-free spaces have been extensively studied in recent literature, but their structure still is not yet completely understood. Nevertheless, there are some results for some specific cases. Here we will give some of those results and the required definitions. These definitions will be given in full generality, but we will focus on the case where the underlying metric space is a subset of a finite-dimensional Banach space.
\\Let $(M,d)$ be a pointed metric space, that is, a metric space with a given distinguished point that will be denoted by $0_{M}$. When there is no confusion, we will just write $0$. We denote the space of real-valued Lipschitz functions over $M$ vanishing at $0$ by $Lip_{0}(M)$, that is
\[ Lip_{0}(M):=\{f\in\R^{M}\,:\, \|f\|_{L}<\infty \,\wedge\, f(0)=0 \}, \]
where $\|f\|_{L}$ denotes the Lipschitz constant of $f$, that is
\[ \|f\|_{L}:=\sup_{\substack{x,y\in M\\x\neq y}} \frac{|f(x)-f(y)|}{d(x,y)}. \]
The function $\|\cdot\|_{L}$ defines a norm over the space $Lip_{0}(M)$ and with this norm is a dual Banach space.
\\We define the evaluation function $\delta_{M}:M\to Lip_{0}(M)^{*}$ as the function such that for every $x\in M$
\[ \langle \delta_{M}(x),f \rangle = f(x). \]
Again, when there is no confusion, we will denote this function simply as $\delta$.
\\The Lipschitz-free space over $M$, denoted by $\mathcal{F}(M)$, is defined as the subspace of $Lip_{0}(M)^{*}$ given by
\[ \mathcal{F}(M):=\overline{\mathrm{span}}\{ \delta(x)\,:\,x\in M\setminus\{0\} \}. \]
It is easy to see that the set $\{ \delta(x)\,:\,x\in M\setminus\{0\} \}$ is linearly independent. Also, it can be shown that this space verifies that $\mathcal{F}(M)^{*}\equiv Lip_{0}(M)$, that is, there exists a linear isometry between these spaces.

\subsection{Previous results}

We now present some results concerning Lipschitz-free spaces, as well as some basics on vector-valued $L^{p}$ spaces and Distribution Theory.

\begin{lema}\label{conmdiag}
Let $M,N$ be two metric spaces, each one with a base point ($0_{M}$ and $0_{N}$, respectively) and $F:M\to N$ a Lipschitz function such that $F(0_{M})=0_{N}$. Then, there existe a unique linear operator $\hat{F}:\mathcal{F}(M)\to\mathcal{F}(N)$ such that $Lip(F)=\|\hat{F}\|$ and $\delta_{N}\circ F=\hat{F}\circ\delta_{M}$, that is, the following diagram conmutes
\[ \xymatrix{
M \ar[r]^{F} \ar[d]^{\delta_{M}} & N \ar[d]^{\delta_{N}} \\
\mathcal{F}(M) \ar[r]^{\hat{F}} & \mathcal{F}(N)
} \]
\end{lema}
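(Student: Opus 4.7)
The plan is to build $\hat{F}$ directly on the dense span of Dirac masses and extend by continuity, exploiting the duality between $\mathcal{F}$ and $\mathrm{Lip}_{0}$ to compute norms.

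First I would define a linear map $\hat{F}$ on $\mathrm{span}\{\delta_{M}(x):x\in M\setminus\{0_{M}\}\}\subset\mathcal{F}(M)$ by the rule $\hat{F}(\delta_{M}(x))=\delta_{N}(F(x))$ and linear extension. Well-definedness is immediate from the linear independence of $\{\delta_{M}(x):x\in M\setminus\{0_{M}\}\}$ noted in the introduction; the choice of value at $0_{M}$ is forced to be $0$, which is consistent with $F(0_{M})=0_{N}$. Commutativity of the diagram on $M$ is then just the defining equality.

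The key step is the norm estimate. Consider the pre-composition map $F^{\sharp}:Lip_{0}(N)\to Lip_{0}(M)$ given by $F^{\sharp}(g)=g\circ F$; the condition $F(0_{M})=0_{N}$ guarantees that $g\circ F$ vanishes at $0_{M}$, and the chain inequality yields $\|g\circ F\|_{L}\le Lip(F)\|g\|_{L}$. For any $\mu=\sum_{i}\alpha_{i}\delta_{M}(x_{i})$, using $\mathcal{F}(N)^{*}\equiv Lip_{0}(N)$,
\[
\|\hat{F}(\mu)\|_{\mathcal{F}(N)}=\sup_{\|g\|_{L}\le 1}\Bigl|\sum_{i}\alpha_{i}g(F(x_{i}))\Bigr|=\sup_{\|g\|_{L}\le 1}|\langle\mu,F^{\sharp}(g)\rangle|\le Lip(F)\,\|\mu\|_{\mathcal{F}(M)}.
\]
This bound lets me extend $\hat{F}$ by continuity to the whole space $\mathcal{F}(M)$ with $\|\hat{F}\|\le Lip(F)$.

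For the reverse inequality, I would use the standard isometric property $\|\delta_{M}(x)-\delta_{M}(y)\|_{\mathcal{F}(M)}=d(x,y)$ (a consequence of the McShane extension, which I take to be in the background). Applied to both sides,
\[
\|\hat{F}\|\ge\frac{\|\hat{F}(\delta_{M}(x)-\delta_{M}(y))\|_{\mathcal{F}(N)}}{\|\delta_{M}(x)-\delta_{M}(y)\|_{\mathcal{F}(M)}}=\frac{d_{N}(F(x),F(y))}{d_{M}(x,y)}
\]
for every $x\neq y$, so taking the supremum yields $\|\hat{F}\|\ge Lip(F)$. Uniqueness of $\hat{F}$ is finally automatic: any two linear continuous operators agreeing on the dense subspace $\mathrm{span}\{\delta_{M}(x)\}$ coincide.

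The only genuine subtlety is the lower bound $\|\hat{F}\|\ge Lip(F)$, which ultimately relies on the isometric embedding $\delta_{M}:M\hookrightarrow\mathcal{F}(M)$; the upper bound and the existence/uniqueness parts are formal once the pre-composition with $F$ is identified as the mechanism transferring Lipschitz norms.
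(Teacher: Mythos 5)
Your proof is correct and is the canonical argument for this lemma; the paper itself states it as a known preliminary without proof, and what you wrote (define $\hat{F}$ on the dense span of Dirac masses, get the upper bound $\|\hat{F}\|\le Lip(F)$ via the pre-composition operator $g\mapsto g\circ F$ and the duality $\mathcal{F}(N)^{*}\equiv Lip_{0}(N)$, and the lower bound from $\|\delta_{M}(x)-\delta_{M}(y)\|=d(x,y)$) is exactly the standard route the author is implicitly relying on. No gaps: the only point worth making explicit is the isometry $\|\delta(x)-\delta(y)\|=d(x,y)$, which you correctly flag and which follows from testing against the $1$-Lipschitz function $z\mapsto d(z,y)-d(0,y)$.
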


Lemma~\ref{conmdiag} shows that the whole Lipschitz structure present in a given metric space is fully absorbed by its Lipschitz-free space, as it assigns a different direction to every point of the metric space. It is worth noticing that if $M$ is a metric subspace of $N$, Lemma~\ref{conmdiag} also says that $\mathcal{F}(M)$ can be seen as a subspace of $\mathcal{F}(N)$. Considering these facts, this gives a way of linearize problems, in the sense that they can be seen as linear problems on the Lipschitz-free spaces. The main problem is that the structure of Lipschitz-free space is not yet fully understood. As an example, it is known that $\mathcal{F}(\R)$ is isometric to $L^{1}(\R)$, but in A. Naor and G. Schechtman proved on \cite{NS} that $\mathcal{F}(\R^{2})$ is not isomorphic to any subspace of $L^{1}$. Also, in the separable case is still open the question of the relation between Lipschitz-equivalence and isomorphisms, that is, if is it true that if two Banach spaces $X,Y$ are Lipschitz-equivalent (there exists a Lipschitz homeomorfism $F:X\to Y$), then they are isomorphic.

In the case when the metric space asociated to the Lipschitz-free space is finite-dimensional, we have the following results

\begin{teo}[Lancien-Perneck\'a, \cite{LP}]
The Lipschitz-free spaces $\mathcal{F}(\ell_{1})$ and $\mathcal{F}(\ell_{1}^{N})$ admit monotone finite-dimensional Schauder decompositions.
\end{teo}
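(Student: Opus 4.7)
The plan is to build a sequence of norm-one projections $P_{n}$ on $\mathcal{F}(\ell_{1}^{N})$ with nested finite-dimensional ranges whose union is dense. The decomposition $E_{1}:=\operatorname{Ran}(P_{1})$, $E_{n+1}:=\operatorname{Ran}(P_{n+1}-P_{n})$ is then a finite-dimensional Schauder decomposition whose partial-sum projections coincide with the $P_{n}$, so the bound $\|P_{n}\|=1$ is exactly the monotonicity required.

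My first instinct would be to use Lemma~\ref{conmdiag}: take $1$-Lipschitz retractions $r_{n}:\ell_{1}^{N}\to F_{n}$ onto nested finite subsets of $\ell_{1}^{N}$ containing $0$, and push forward to norm-one projections $P_{n}=\widehat{r_{n}}$ with range $\mathcal{F}(F_{n})$, which is finite-dimensional of dimension $|F_{n}|-1$. But this cannot work verbatim: there is no $1$-Lipschitz retraction of $\R$ onto any two-point subset, since such a retraction would have to jump by the full target distance across an arbitrarily small region. The targets therefore cannot be genuinely finite if one insists on Lipschitz constant exactly $1$, so a different route is needed.

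The remedy I would pursue is to construct the $P_{n}$ directly via the duality $\mathcal{F}(\ell_{1}^{N})^{*}=Lip_{0}(\ell_{1}^{N})$. The model case $N=1$ is the Haar-basis decomposition of $\mathcal{F}(\R)=L^{1}(\R)$: on the Lipschitz side this corresponds to replacing $f\in Lip_{0}(\R)$ by the piecewise-affine function that agrees with $f$ at the dyadic points $k\cdot 2^{-n}$, an operation of operator norm $1$ whose preadjoint has finite rank on each bounded interval. For general $N$ I would carry this out coordinatewise by piecewise-affine (simplex-wise) interpolation on the dyadic grid in $[-n,n]^{N}$, using the $\ell_{1}$ additivity $\|x\|_{1}=\sum_{i}|x_{i}|$ to reduce the Lipschitz estimate to the one-dimensional case. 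The resulting operators $Q_{n}$ on $Lip_{0}(\ell_{1}^{N})$ would have $\|Q_{n}\|=1$ and finite-rank preadjoints $P_{n}=Q_{n}^{*}|_{\mathcal{F}}$, nested because the dyadic grids refine and whose union is weak-*-dense because every Lipschitz function is well-approximated by its piecewise-affine interpolants on bounded sets.

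The main technical obstacle will be verifying $\|Q_{n}\|=1$ exactly for the multivariate interpolation: coordinatewise $1$-Lipschitz constructions succeed in $\ell_{1}$ precisely because the norm is the sum of coordinate differences, a property that fails in $\ell_{p}$ for $p>1$ and that is the real reason the theorem is specific to $\ell_{1}$. Strong convergence $P_{n}\xi\to\xi$ then reduces, by density of $\operatorname{span}\{\delta(x)\}$ and the uniform bound $\|P_{n}\|=1$, to pointwise convergence of the interpolations to the identity. For $\mathcal{F}(\ell_{1})$, one combines the above with the $1$-Lipschitz coordinate projections $\pi_{n}:\ell_{1}\to\ell_{1}^{n}$; the induced $\widehat{\pi_{n}}:\mathcal{F}(\ell_{1})\to\mathcal{F}(\ell_{1}^{n})$ are norm-one by Lemma~\ref{conmdiag}, and interlacing the finite-dimensional blocks obtained from each $\mathcal{F}(\ell_{1}^{n})$ assembles the desired monotone FDD for $\mathcal{F}(\ell_{1})$.
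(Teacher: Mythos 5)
This statement is quoted as background from \cite{LP}; the paper itself gives no proof of it, so the only meaningful comparison is with the cited source. Your proposal --- norm-one commuting finite-rank projections obtained by dualizing piecewise-affine (or multilinear) interpolation on refining dyadic grids, with the additivity of the $\ell_{1}$-norm yielding the exact norm-one bound, truncation to $[-n,n]^{N}$ yielding finite rank, and the coordinate projections $\pi_{n}$ handling $\mathcal{F}(\ell_{1})$ itself --- is essentially the argument of Lancien and Perneck\'a, and your observation that no $1$-Lipschitz retraction onto a finite subset exists correctly rules out the naive route via Lemma~\ref{conmdiag}.
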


\begin{teo}[H\'ajek-Perneck\'a, \cite{HP}]
Let $X$ be a product of countably many closed (possibly unbounded or degenerate) intervals in $\R$ with endpoints in $\Z\cup\{-\infty,\infty\}$, considered as a metric subspace of $\ell_{1}$ equipped with the inherited metric. Then the Lipschitz-free space $\mathcal{F}(X)$ has a Schauder basis.

In particular, the Lipschitz-free spaces $\mathcal{F}(\ell_{1})$ and $\mathcal{F}(\ell_{1}^{d})$, where $d\in\N$, have a Schauder basis.
\end{teo}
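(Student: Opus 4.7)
The plan is to exhibit a Schauder basis for $\mathcal{F}(X)$ by first building a finite-dimensional Schauder decomposition (FDD) through a sequence of Lipschitz retractions onto nested compact pieces of $X$, and then refining each finite-dimensional summand into an ordinary basis using known bases for the Lipschitz-free spaces of unit cells. The $\ell_1$ metric on the ambient space is the feature that makes this gluing possible, since $\ell_1$-nearest-point retractions onto axis-parallel polyhedral sets are $1$-Lipschitz.

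More precisely, first I would use the integer endpoints to cut each factor $I_k$ into unit segments $[n,n+1]$ together with, where applicable, unbounded caps. Choose an enumeration $(C_j)_{j\in\N}$ of these cells, subject to the single requirement that for every $N$ the union $X_N:=\bigcup_{j\le N}C_j$ (intersected appropriately with $X$ along the remaining coordinates) is a connected, base-point-containing polyhedral subset of $X$, so that $X_0\subset X_1\subset\cdots$ exhausts a dense subset of $X$. Second, I would define $r_N:X\to X_N$ coordinate-wise as the $\ell_1$-nearest-point projection onto the convex set $X_N$; since these projections act independently in each coordinate (the coordinate $x_k$ is simply clipped to the current admissible sub-interval), each $r_N$ is $1$-Lipschitz and satisfies $r_N\circ r_M=r_{\min(M,N)}$. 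Applying Lemma~\ref{conmdiag}, I obtain commuting contractive linear projections $\hat r_N:\mathcal{F}(X)\to\mathcal{F}(X_N)\hookrightarrow\mathcal{F}(X)$. Since $\bigcup_N X_N$ is dense in $X$ and $\mathcal F$ depends continuously on the point, $\hat r_N\to\mathrm{Id}$ pointwise on the generating set $\{\delta(x)\}$, and hence on all of $\mathcal F(X)$ by a standard $3\varepsilon$-argument; this yields a monotone FDD whose $N$-th summand $E_N:=(\hat r_N-\hat r_{N-1})\mathcal F(X)$ has dimension equal to the number of new vertices in $X_N\setminus X_{N-1}$ plus a contribution corresponding to the interior of the new cell.

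To upgrade the FDD to a Schauder basis, for each increment one needs a basis of $E_N$. Since each newly added cell is either a unit interval, a half-line, or (degenerately) a single point sitting in a single coordinate of $\ell_1$, $E_N$ is isometric to a quotient of $\mathcal F([0,1])$, $\mathcal F([0,\infty))$ or a finite-dimensional $\mathcal F$-space, each of which is isometric to a subspace of $L^1$ and admits a monotone Schauder basis (for instance the Faber-Schauder system on $[0,1]$). Concatenating these local bases in the order of the $C_j$, with the ordinary vertex functionals $\delta(v)-\delta(r_{N-1}(v))$ placed first in each block, produces a candidate basis for $\mathcal F(X)$. The partial-sum projections factor as a monotone FDD projection $\hat r_N$ composed with a finite-dimensional partial-sum projection inside $E_N$; uniform boundedness of the latter, independent of $N$, is the crux and follows from the fact that the basis chosen in each cell depends only on the isometry type of that cell, of which there are only finitely many (unit interval, two half-lines, single point).

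The main obstacle I anticipate is not Step 1, which is essentially combinatorial, but the verification that the local finite-dimensional projections inside each $E_N$ extend to globally bounded operators on $\mathcal F(X)$ with a constant independent of $N$. This requires carefully exploiting the additivity of the $\ell_1$ metric to show that local perturbations in one cell do not disturb $\hat r_M$-increments in other cells; equivalently, that the Lipschitz-free space of an $\ell_1$-product of two metric spaces splits in the strong enough sense needed to combine bases. Once this coordinate-wise independence is proven, the refinement and the concatenation are routine, and the resulting sequence is a Schauder basis of $\mathcal F(X)$. The particular case $X=\ell_1$ or $X=\ell_1^d$ follows by taking each $I_k=\R$.
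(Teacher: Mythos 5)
First, note that the paper does not prove this theorem: it is quoted verbatim from H\'ajek--Perneck\'a \cite{HP} as background, so there is no internal proof to compare against; I can only assess your sketch on its own terms. The one genuinely correct germ in it is the observation that coordinate-wise clipping in $\ell_{1}$ onto a \emph{product of intervals} is $1$-Lipschitz and that these retractions commute; this is indeed the mechanism behind the monotone Schauder decompositions of Lancien--Perneck\'a and the basis of \cite{HP}. But your cell-by-cell scheme breaks at the first step in dimension $\geq 2$. A union $X_{N}=\bigcup_{j\leq N}C_{j}$ of unit cells is in general not a product of intervals, and then there is no coordinate-wise ``clipping'' onto it; the $\ell_{1}$-nearest-point map onto a non-convex polyhedral union is neither single-valued nor $1$-Lipschitz. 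Conversely, if you insist that every $X_{N}$ be a box, you cannot pass from one box to the next by adjoining a single cell (already in $\R^{2}$, going from $[0,2]\times[0,1]$ to $[0,2]\times[0,2]$ adds two cells), and in the infinite product case a ``cell'' is itself an infinite-dimensional cube. Consequently your description of the increments is wrong: the new piece attached at stage $N$ is a $d$-dimensional slab glued along a face, not a unit interval, half-line or point sitting in one coordinate, and the corresponding summand $E_{N}$ is an infinite-dimensional space related to $\mathcal{F}$ of a hypercube relative to a face --- not a quotient of $\mathcal{F}([0,1])$ or $\mathcal{F}([0,\infty))$. In particular it cannot be handled by Faber--Schauder/$L^{1}$ technology: by the Naor--Schechtman result quoted in this paper, $\mathcal{F}(\R^{2})$ does not even embed into $L^{1}$, so the two-dimensional blocks are genuinely not $L^{1}$-objects.

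The second gap is the one you yourself flag as ``the crux'': uniform boundedness, independent of $N$, of the intra-block partial-sum projections \emph{as operators on all of} $\mathcal{F}(X)$. The justification offered --- that each cell has one of finitely many isometry types --- only controls the norms of the local projections on the local free spaces; it says nothing about the norm of the composed operator $P\circ(\hat r_{N}-\hat r_{N-1})$ on $\mathcal{F}(X)$, which is exactly where the work lies. In \cite{HP} this is resolved not by enumerating cells but by an inductive construction of a basis of $\mathcal{F}$ of a hypercube (exploiting self-similarity and an extension lemma for bases along suitable retractions $Y\subseteq X$), and that argument is the substance of the theorem; it is absent from your sketch. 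As written, the proposal establishes (after repairing the box issue) a monotone Schauder \emph{decomposition} into infinite-dimensional summands, but not a Schauder basis.
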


\begin{teo}[Perneck\'a-Smith, \cite{PS}]
Let $N\geq 1$ and consider $\R^{N}$ equipped with some norm $\|\cdot\|$. Let a compact set $M\subseteq\R^{n}$ have the property that given $\xi>0$, there exists a set $\hat{M}\subseteq\R^{N}$ and a Lipschitz map $\Psi:\hat{M}\to M$, such that $M\subseteq \textrm{int}(M)$, $Lip(\Psi)\leq 1+\xi$ and $\|x-\Psi(x)\|\leq\xi$ for all $x\in\hat{M}$. Then the Lipschitz-free space $\mathcal{F}(M)$ has the metric approximation property. In particular, this is true if $M\subseteq\R^{N}$ is compact and convex.
\end{teo}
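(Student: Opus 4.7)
The natural approach is to use Lemma~\ref{conmdiag} to linearize the approximation data $(\hat{M}, \Psi)$ provided by the hypothesis, producing finite-rank operators on $\mathcal{F}(M)$ that approximate the identity.

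First, I would fix $\xi > 0$ and invoke the hypothesis to obtain $\hat{M} \subseteq \R^{N}$ with $M \subseteq \mathrm{int}(\hat{M})$ (so in particular $M \subseteq \hat{M}$) and $\Psi: \hat{M} \to M$ with $\mathrm{Lip}(\Psi) \leq 1+\xi$. Lemma~\ref{conmdiag} lifts the $1$-Lipschitz inclusion $M \hookrightarrow \hat{M}$ to a linear embedding $\hat{\iota}: \mathcal{F}(M) \hookrightarrow \mathcal{F}(\hat{M})$, which is actually isometric since every Lipschitz function on $M$ admits a norm-preserving extension to $\hat{M}$ (McShane--Whitney). It also lifts $\Psi$ to $\hat{\Psi}: \mathcal{F}(\hat{M}) \to \mathcal{F}(M)$ with $\|\hat{\Psi}\| \leq 1+\xi$. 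Setting $T_{\xi} := \hat{\Psi} \circ \hat{\iota}$, one has $\|T_{\xi}\| \leq 1+\xi$ and $T_{\xi}\,\delta_{M}(x) = \delta_{M}(\Psi(x))$ for every $x \in M$, so that $\|T_{\xi}\,\delta_{M}(x) - \delta_{M}(x)\|_{\mathcal{F}(M)} \leq \|\Psi(x) - x\| \leq \xi$. A standard density-plus-equicontinuity argument then extends this to $T_{\xi} \to \mathrm{id}_{\mathcal{F}(M)}$ strongly, uniformly on compact subsets, as $\xi \to 0$.

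Next I would ensure $T_{\xi}$ has finite rank. Since $\hat{\iota}$ is an isometric embedding, this reduces to making $\mathrm{range}(\hat{\Psi})$ finite-dimensional, and a sufficient condition is that $\Psi(\hat{M})$ be contained in a finite set $\{y_{1}, \dots, y_{k}\} \subseteq M$: then $\mathrm{range}(\hat{\Psi}) \subseteq \mathrm{span}\{\delta_{M}(y_{i})\}$. For the ``in particular'' claim about compact convex $M$, I would construct $(\hat{M}, \Psi)$ explicitly by choosing a finite $\xi$-net $\{x_{1}, \dots, x_{k}\} \subseteq M$, taking $\hat{M}$ to be a small thickening $M + \xi B_{\R^{N}}$ (so $M \subseteq \mathrm{int}(\hat{M})$), and defining $\Psi(y) = \sum_{i} \varphi_{i}(y)\, x_{i}$ for a Lipschitz partition of unity $\{\varphi_{i}\}$ on $\hat{M}$ with $\mathrm{supp}(\varphi_{i})$ near $x_{i}$; convexity of $M$ guarantees $\Psi(y) \in M$, and the support condition gives $\|\Psi(y) - y\| \leq \xi$.

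The hardest step is producing the partition of unity so that $\mathrm{Lip}(\Psi)$ is actually close to $1$ rather than merely bounded: a naive partition yields a Lipschitz constant of order $1/\xi$ locally, whereas the bound $\mathrm{Lip}(\Psi) \leq 1+\xi$ requires the derivatives of the $\varphi_{i}$ to cancel appropriately via the convex-combination structure. This forces a careful coupling between the net and the partition, exploiting the geometry of balls in $(\R^{N}, \|\cdot\|)$. Once a finite-rank $T_{\xi}$ with $\|T_{\xi}\| \leq 1+\xi$ converging strongly to the identity is in hand, the metric approximation property is obtained by a standard perturbation argument that absorbs the $\xi$-loss in the norm into the identity approximation on any fixed compact set.
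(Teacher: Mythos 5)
The paper does not prove this theorem; it is quoted from Perneck\'a--Smith \cite{PS} as background, so I can only judge your sketch on its own merits, and it has a genuine gap at the central point: the production of \emph{finite-rank} operators. Your first paragraph is fine --- lifting $(\hat{M},\Psi)$ to $T_{\xi}=\hat{\Psi}\circ\hat{\iota}$ with $\|T_{\xi}\|\leq 1+\xi$ and $T_{\xi}\to\mathrm{id}$ strongly is correct and is indeed how the hypothesis gets used. But these $T_{\xi}$ are not finite rank, and both of your attempts to make them so fail. First, $\Psi(\hat{M})$ cannot be a finite set: $\hat{M}$ contains a neighborhood of the compact connected set $M$ (in the convex case), a continuous map from a connected set into a finite set is constant, and a constant $\Psi$ violates $\|x-\Psi(x)\|\leq\xi$ as soon as $\mathrm{diam}(M)>2\xi$. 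Second, the convex-combination map $\Psi(y)=\sum_{i}\varphi_{i}(y)x_{i}$ does not induce a finite-rank $\hat{\Psi}$: the map $\delta_{M}$ is not linear, so $\delta_{M}\bigl(\sum_{i}\varphi_{i}(y)x_{i}\bigr)\neq\sum_{i}\varphi_{i}(y)\delta_{M}(x_{i})$; the range of $\hat{\Psi}$ is $\overline{\mathrm{span}}\,\delta_{M}(\Psi(\hat{M}))$, and $\Psi(\hat{M})$ is an infinite subset of the polytope $\mathrm{conv}\{x_{1},\dots,x_{k}\}$, whose free space is infinite-dimensional. No $\Psi$ satisfying the hypotheses can yield finite rank by composition alone, so the finite-rank approximants must come from somewhere else entirely.

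In the actual argument the roles are split: the data $(\hat{M},\Psi)$ serve only to \emph{transfer} operators, while the finite-rank operators are built independently on the free space over a union $K$ of small cubes from a grid covering $M$, chosen fine enough that $M\subseteq K\subseteq\hat{M}$. One constructs a weak*-weak* continuous operator on $Lip_{0}(K)$ replacing a Lipschitz function by its multilinear interpolant from its values at the finitely many grid vertices; its preadjoint is a finite-rank operator on $\mathcal{F}(K)$ whose norm tends to $1$ as the grid is refined. That norm estimate is the hard geometric core of Perneck\'a--Smith (and of Lancien--Perneck\'a for $\mathcal{F}(\ell_{1}^{N})$ and $\mathcal{F}(\R^{N})$), and it is absent from your sketch --- you correctly sense that ``getting the constant close to $1$'' is the crux, but you locate that difficulty inside the construction of $\Psi$, where it cannot be resolved. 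Composing $\mathcal{F}(M)\hookrightarrow\mathcal{F}(K)\to\mathcal{F}(K)\overset{\hat{\Psi}}{\to}\mathcal{F}(M)$ gives finite-rank operators of norm at most $(1+\xi)(1+\epsilon_{K})$ converging strongly to the identity, whence the MAP. Finally, the ``in particular'' clause is verified much more simply than by a partition of unity: for $M$ compact convex with $0$ in its interior, take $\hat{M}=(1+\xi')M$ and $\Psi(x)=x/(1+\xi')$, which is $1$-Lipschitz and moves points by at most $\xi'\,\mathrm{diam}(\hat{M})$; only the case of empty interior needs extra care.
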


As we see, there are already results concerning finite-dimensional spaces in some specific cases for general approximation properties. Our goal is to develop a technique for identifying the Lipschitz-free spaces for open convex subsets of finite-dimensional normed spaces. To this end, and in analogy to the case where $n=1$ (as we will see) we need some concepts of measure theory, more precisely the Lebesgue-Bochner spaces. As we can see, for example, in \cite{DU}, the definitions of simple functions, measurable functions and Bochner-integrability are exactly the same, just changing the range of the functions by a Banach space $X$. We will now state the main results that will be needed for the proofs presented here. First, recall Lebesgue Differentiation Theorem

\begin{teo}\label{ldt}
Let $f:\Omega\subseteq\R^{n}\to\R$ be a Lebesgue-integrable function. Then, for almost every $x\in\Omega$ we have that
\[ f(x)=\lim_{\varepsilon\searrow 0} \frac{1}{\lambda^{(n)}(B(0,\varepsilon)\cap\Omega)} \int_{B(0,\varepsilon)\cap\Omega} f d\lambda^{(n)}. \]
\end{teo}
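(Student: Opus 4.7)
The plan is to prove this classical result via the Hardy--Littlewood maximal function technique. The first step is to extend $f$ by zero to all of $\R^{n}$ and introduce the centered maximal operator
\[ Mf(x) := \sup_{r>0}\frac{1}{\lambda^{(n)}(B(x,r))}\int_{B(x,r)} |f|\,d\lambda^{(n)}, \]
and then establish its weak-type $(1,1)$ inequality $\lambda^{(n)}(\{Mf>\alpha\}) \leq \frac{C_{n}}{\alpha}\|f\|_{1}$ for all $\alpha>0$. The key geometric ingredient here is the Vitali covering lemma: from any finite family of balls one extracts a pairwise disjoint subfamily whose $3$-dilates cover the union, giving the constant $C_{n}=3^{n}$. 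This is, I expect, the main technical obstacle.

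Next, I would use the density of $C_{c}(\R^{n})$ in $L^{1}(\R^{n})$. Given $\eta>0$, write $f=g+h$ with $g$ continuous of compact support and $\|h\|_{1}<\eta$. For $g$, uniform continuity yields at every $x$ that the averages $\frac{1}{\lambda^{(n)}(B(x,r))}\int_{B(x,r)} g\,d\lambda^{(n)}$ tend to $g(x)$ as $r\searrow 0$, so the oscillation
\[ Tf(x) := \limsup_{r\searrow 0}\left|\frac{1}{\lambda^{(n)}(B(x,r))}\int_{B(x,r)} f\,d\lambda^{(n)} - f(x)\right| \]
satisfies $Tf=Th\leq Mh+|h|$ pointwise.

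For any $\alpha>0$, combining the weak-type estimate for $Mh$ with Chebyshev applied to $|h|$ yields $\lambda^{(n)}(\{Tf>\alpha\})\leq\frac{(C_{n}+1)\eta}{\alpha}$. Letting $\eta\to 0$ gives $\lambda^{(n)}(\{Tf>\alpha\})=0$ for each $\alpha>0$, hence $Tf=0$ almost everywhere in $\R^{n}$. This establishes the $\R^{n}$-version of the statement with balls $B(x,r)$. To descend to the formulation with $B(x,r)\cap\Omega$, apply the result just proved to the indicator $\mathbf{1}_{\Omega}$ to conclude that a.e. $x\in\Omega$ is a Lebesgue density point; combining this with the $\R^{n}$-version applied to $f\mathbf{1}_{\Omega}$, one gets $\lambda^{(n)}(B(x,r)\cap\Omega)/\lambda^{(n)}(B(x,r))\to 1$ for such $x$, so replacing the larger normalizer by the smaller one does not alter the limit.

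In summary, the heart of the argument is the Vitali-type covering lemma and the resulting weak-type inequality for $M$; once those are in place, the approximation-plus-Chebyshev step and the descent from $\R^{n}$ to $\Omega$ via Lebesgue density are essentially routine.
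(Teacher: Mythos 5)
Your proof is correct and is the standard maximal-function argument for the Lebesgue Differentiation Theorem; the paper itself offers no proof of this statement, merely recalling it as a classical prerequisite, so there is no in-paper argument to compare against. Two small remarks. First, you tacitly read the ball $B(0,\varepsilon)$ in the statement as $B(x,\varepsilon)$; this is surely a typo in the paper and your reading is the intended one. Second, in the descent from $\R^{n}$ to $\Omega$ you apply the $\R^{n}$-version to $\mathbf{1}_{\Omega}$, which need not be integrable when $\Omega$ has infinite measure; since the theorem is local, this is repaired by working with $\mathbf{1}_{\Omega\cap B(0,N)}$ for each $N$ (or by stating the maximal-function argument for locally integrable functions), and likewise your constant $(C_{n}+1)\eta/\alpha$ should carry a factor $2$ from splitting $\{Tf>\alpha\}\subseteq\{Mh>\alpha/2\}\cup\{|h|>\alpha/2\}$ --- neither point affects the conclusion.
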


From here on, $X$ is a Banach space and $(\Omega,\Sigma,\mu)$ a measure space.

\begin{teo}
A $\mu$-measurable function $f:\Omega\to X$ is Bochner-integrable if and only if the function $\|f\|:\Omega\to\R$ is integrable.
\end{teo}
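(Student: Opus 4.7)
The plan is to establish both directions using the definition that $f:\Omega\to X$ is Bochner-integrable when there exists a sequence of (finite-measure supported) simple functions $(s_n)$ with $\int_\Omega \|f-s_n\|\,d\mu \to 0$, combined with the Pettis measurability theorem, which says that a $\mu$-measurable function is the a.e.~limit of a sequence of simple functions.

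For the forward implication, I would assume $f$ is Bochner-integrable and pick such an approximating sequence $(s_n)$. Since each $s_n$ is a simple function with support of finite measure, $\|s_n\|$ is a real-valued integrable function. By the triangle inequality
\[ \int_\Omega \|f\|\,d\mu \leq \int_\Omega \|f-s_n\|\,d\mu + \int_\Omega \|s_n\|\,d\mu, \]
and choosing $n$ large enough makes the first term finite; the second is finite by construction. Hence $\|f\|$ is integrable. This direction is routine and should present no real obstacle.

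For the converse, assume $\|f\|$ is integrable and $f$ is $\mu$-measurable. By Pettis' theorem there exist simple functions $(\sigma_n)$ with $\sigma_n \to f$ $\mu$-a.e. The issue is that we have no a priori control over $\|\sigma_n\|$, so I would truncate: define
\[ s_n(x) := \sigma_n(x)\,\1_{A_n}(x), \qquad A_n := \{x\in\Omega \,:\, \|\sigma_n(x)\| \leq 2\|f(x)\|\}. \]
Each $s_n$ is still simple, and since $\sigma_n \to f$ a.e.\ we still have $s_n \to f$ a.e.\ (on a set where $f\neq 0$, eventually $\|\sigma_n\|\le 2\|f\|$; where $f=0$, if $\sigma_n$ has value outside the corresponding level set it gets zeroed, but $\sigma_n(x)\to 0$ forces $\|\sigma_n(x)\|\le 2\|f(x)\|=0$ eventually for those indices with nonzero values, and a minor argument handles the trivial points). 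Then $\|f-s_n\|\leq 3\|f\|$ a.e., and since $\|f\|$ is integrable the dominated convergence theorem yields $\int_\Omega \|f-s_n\|\,d\mu \to 0$. Finally, the support of each $s_n$ has finite measure because on $A_n$ the finitely many nonzero values $x_k$ of $\sigma_n$ satisfy $\|x_k\|\le 2\|f\|$, so $\mu(\{s_n=x_k\})\le \mu(\{\|f\|\ge \|x_k\|/2\}) < \infty$ by Chebyshev and integrability of $\|f\|$.

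The main obstacle is this truncation/domination step in the converse: getting simple approximants that are both pointwise close to $f$ and bounded in norm by $2\|f\|$, while still keeping supports of finite measure. Once this is set up, dominated convergence finishes the argument cleanly.
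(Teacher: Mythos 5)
Your proof is correct. Note first that the paper itself offers no proof of this statement: it is quoted as a classical result (Bochner's integrability criterion, from the reference [DU]) among the preliminaries, so there is no argument in the paper to compare against; your write-up is a legitimate self-contained proof of the standard fact. The forward direction via the triangle inequality is fine, and the truncation $s_n=\sigma_n\mathds{1}_{A_n}$ with $A_n=\{\|\sigma_n\|\le 2\|f\|\}$ is exactly the right device for the converse: it preserves simplicity and a.e.\ convergence (at points where $f(x)=0$ one in fact has $s_n(x)=0=f(x)$ for every $n$, so the case analysis there can be shortened), it gives the uniform domination $\|f-s_n\|\le 3\|f\|$ needed for dominated convergence, and your Chebyshev argument correctly shows each $s_n$ has support of finite measure, so the approximants are genuinely $\mu$-integrable simple functions. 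One small terminological point: what you attribute to the Pettis measurability theorem --- that a $\mu$-measurable function is the a.e.\ limit of simple functions --- is, in the Diestel--Uhl framework the paper works in, the \emph{definition} of $\mu$-measurability (Pettis' theorem is the characterization via weak measurability plus essentially separable range). This does not affect the validity of the argument, since the fact you actually use is available either way.
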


\begin{defn}
Let $p\in[1,\infty)$. The Lebesgue-Bochner space $L^{p}(\Omega,\Sigma,\mu;X)$ is the space given by the (equivalence classes of) $\mu$-Bochner-integrable functions $f:\Omega\to X$ such that
\[ \|f\|_{p}:=\left(\int_{\Omega}\|f\|^{p}d\mu\right)^{\frac{1}{p}}<\infty, \]
which becomes a Banach space with the norm $\|\cdot\|_{p}$. Also, we define the Lebesgue-Bochner space $L^{\infty}(\Omega,\Sigma,\mu;X)$ as the space given by the (equivalence classes of) $\mu$-measurable essentially bounded functions, that is such that
\[ \|f\|_{\infty}:=\esssup_{\Omega}\|f\| < \infty. \]
Again, this space becomes a Banach space with the norm $\|\cdot\|_{\infty}$.
\end{defn}

Finally, we recall the definition of the Radon-Nikodym property (from here on, RNP) and some essential results

\begin{defn}
We say that a Banach space $X$ has the RNP if for every $\sigma$-finite measure space $(\Omega,\Sigma,\mu)$ we have that for every $X$-valued absolutely continuous measure $\nu:\Sigma\to X$ of bounded variation, there exists a function $f\in L^{1}(\Omega,\Sigma,\mu;X)$ such that
\[ \nu(A)=\int_{A}f d\mu\,,\,\forall A\in\Sigma. \]
\end{defn}

\begin{teo}[Dunford-Pettis]
Let $X$ be a separable dual space. Then, $X$ has the RNP. In particular, every reflexive Banach space has the RNP.
\end{teo}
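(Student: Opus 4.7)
The plan is to realise $X$ as a dual space $Y^{*}$ with $Y$ separable and to construct the Radon--Nikodym derivative $f:\Omega\to X$ by testing against the predual. Since $X$ is separable, a standard argument (taking the closed span of a countable weak-$*$ dense set in a chosen predual) produces a separable $Y$, so we may assume $Y$ itself separable. Given an absolutely continuous $\nu:\Sigma\to X$ of bounded variation, denote by $h:=d|\nu|/d\mu\in L^{1}(\Omega,\Sigma,\mu)$ the scalar Radon--Nikodym derivative of the total variation $|\nu|$, which is absolutely continuous with respect to $\mu$ by hypothesis.

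For each $y\in Y$ the scalar measure $\nu_{y}(A):=\langle\nu(A),y\rangle$ is signed, absolutely continuous, and satisfies $|\nu_{y}|\leq\|y\|\,|\nu|$, so the classical Radon--Nikodym theorem produces $g_{y}\in L^{1}(\mu)$ with $\nu_{y}(A)=\int_{A}g_{y}\,d\mu$ and $|g_{y}|\leq\|y\|\,h$ $\mu$-a.e. Fixing a countable $\Q$-linear dense subset $\{y_{n}\}\subseteq Y$, linearity and uniqueness of the scalar derivative yield $g_{ay_{n}+by_{m}}=ag_{y_{n}}+bg_{y_{m}}$ a.e.\ for all rational $a,b$ and all $n,m$. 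Taking the countable union of the exceptional null sets, we obtain a full-measure set $\Omega_{0}$ on which the assignment $y_{n}\mapsto g_{y_{n}}(\omega)$ is $\Q$-linear and dominated by $\|y_{n}\|\,h(\omega)$. Uniform continuity extends this map to a bounded linear functional on $Y$, that is to an element $f(\omega)\in Y^{*}=X$ with $\|f(\omega)\|\leq h(\omega)$; on the complement of $\Omega_{0}$ we set $f(\omega):=0$.

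It remains to show $f$ is Bochner measurable and that it realises $\nu$ as its integral. By construction $\langle f(\cdot),y\rangle=g_{y}$ is scalar measurable for every $y\in Y$, so $f$ is weak-$*$ measurable. Using the separability of $X$, the Pettis measurability theorem upgrades this to Bochner measurability; combined with $\|f\|\leq h\in L^{1}(\mu)$ and the integrability criterion stated earlier, this gives $f\in L^{1}(\Omega,\Sigma,\mu;X)$. The equality $\nu(A)=\int_{A}f\,d\mu$ is then checked by pairing with an arbitrary $y\in Y$: one has $\langle\int_{A}f\,d\mu,y\rangle=\int_{A}g_{y}\,d\mu=\nu_{y}(A)=\langle\nu(A),y\rangle$, and $Y$ separates points of $X$. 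The corollary for reflexive spaces follows by the standard localisation of RNP: closed subspaces of reflexive spaces are reflexive, and a separable reflexive space is the dual of its own (then necessarily separable) dual, so the first part applies after restricting to such a subspace containing the range of $\nu$.

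The main obstacle I expect is the passage from weak-$*$ measurability to Bochner measurability of $f$: for dual-valued functions these notions genuinely diverge in general, and it is precisely the separability hypothesis on $X$, invoked through Pettis's theorem, that closes this gap. A secondary technical point is the preliminary reduction to a separable predual $Y$; this is routine but must be done before the countable-dense-set construction can even begin.
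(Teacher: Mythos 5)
The paper does not actually prove this statement: it records the Dunford--Pettis theorem as a classical result (implicitly from Diestel and Uhl's monograph, the paper's reference \cite{DU}) and uses it as a black box, so there is no in-paper argument to compare yours against. Your proposal is the standard textbook proof and is essentially correct: reduce to a separable predual $Y$ (if $X=Z^{*}$ is separable then $Z$ is automatically separable by the usual Hahn--Banach argument, so the reduction is even easier than you suggest), differentiate the scalar measures $\langle\nu(\cdot),y\rangle$ against $\mu$, glue the derivatives over a countable $\Q$-linear dense set into a pointwise-dominated, weak-$*$ measurable $f:\Omega\to Y^{*}=X$, and then verify $\nu(A)=\int_{A}f\,d\mu$ by testing against $Y$. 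Two steps deserve more care than you give them, though neither is a gap in substance. First, the Pettis measurability theorem as usually stated requires $\langle x^{*},f(\cdot)\rangle$ measurable for \emph{all} $x^{*}\in X^{*}$, whereas you only have it for $y\in Y\subseteq X^{*}$; what you need is the corollary for total (here, norming) subsets of the dual, which applies because $\|f(\omega)-x\|=\sup_{n}|\langle f(\omega)-x,y_{n}\rangle|$ for a countable dense $\{y_{n}\}$ in the unit ball of $Y$, giving Borel measurability of $f$ directly. Second, the deduction of the reflexive case rests on the separable determination of the RNP, which is itself a nontrivial theorem (one must reduce to countably generated $\sigma$-algebras to see that the relevant measures take values in a separable subspace); it should be cited as such rather than treated as routine. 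With those two references made explicit, your argument is complete and is exactly the proof the paper is silently relying on.
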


\begin{teo}
Let $X$ be a Banach space such that its dual $X^{*}$ has the Radon-Nikodym. Then
\[ L^{p}(\Omega,\Sigma,\mu;X)^{*}\equiv L^{q}(\Omega,\Sigma,\mu;X^{*}), \]
where $p\in[1,\infty)$ and $\frac{1}{p}+\frac{1}{q}=1$.
\end{teo}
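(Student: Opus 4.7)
The plan is to define the canonical map $J: L^{q}(\Omega,\Sigma,\mu;X^{*}) \to L^{p}(\Omega,\Sigma,\mu;X)^{*}$ by
\[ J(g)(f) = \int_{\Omega} \langle g(\omega), f(\omega) \rangle \, d\mu(\omega), \]
and prove it is a surjective linear isometry. Well-definedness and the bound $\|J(g)\| \leq \|g\|_{q}$ follow from the pointwise estimate $|\langle g(\omega), f(\omega) \rangle| \leq \|g(\omega)\|_{X^{*}} \|f(\omega)\|_{X}$ combined with the scalar Hölder inequality. For the reverse inequality I would choose test functions that nearly saturate the duality pointwise: for a simple $g = \sum_{i} x_{i}^{*} \mathbf{1}_{A_{i}}$ pick $x_{i} \in X$ with $\|x_{i}\|=1$ and $\langle x_{i}^{*}, x_{i}\rangle \geq (1-\epsilon)\|x_{i}^{*}\|$, then set $f = \sum_{i} \|x_{i}^{*}\|^{q-1} x_{i} \mathbf{1}_{A_{i}}$, and extend to general $g$ by density of simple functions in $L^{q}(\Omega;X^{*})$.

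The substantive content is surjectivity, and this is where RNP is used. Given $\phi \in L^{p}(\Omega;X)^{*}$, I associate to it the $X^{*}$-valued measure $\nu$, defined on sets $A$ of finite $\mu$-measure by
\[ \langle \nu(A), x \rangle := \phi(\mathbf{1}_{A} \otimes x), \qquad x \in X. \]
One checks that $\nu(A) \in X^{*}$ with $\|\nu(A)\| \leq \|\phi\|\,\mu(A)^{1/p}$, that $\nu$ is countably additive (by dominated convergence in $L^{p}$), absolutely continuous with respect to $\mu$, and of bounded variation: for a disjoint partition $A = \bigsqcup_{i} A_{i}$, an $\epsilon$-choice of $x_{i}$ as above together with the scalar Hölder inequality yields $\sum_{i} \|\nu(A_{i})\| \leq \|\phi\|\,\mu(A)^{1/p}$. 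By the Radon-Nikodym property of $X^{*}$, on each finite-measure subset there exists $g \in L^{1}(\Omega;X^{*})$ with $\nu(A) = \int_{A} g\, d\mu$; $\sigma$-finiteness of $\mu$ then produces a globally defined measurable $g$.

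It remains to verify that $g \in L^{q}(\Omega;X^{*})$ with $\|g\|_{q} \leq \|\phi\|$, and that $\phi = J(g)$. The representation $\phi(f) = \int \langle g, f\rangle\, d\mu$ holds on simple functions $f = \sum_{i} x_{i} \mathbf{1}_{A_{i}}$ directly from the definition of $\nu$, and extends to all of $L^{p}(\Omega;X)$ by density. I expect the main technical obstacle to be the quantitative bound $\|g\|_{q} \leq \|\phi\|$: I would handle it by truncating $g$ on sets $E_{n}$ where $\|g\|$ is bounded above and $\mu(E_{n}) < \infty$, and testing $\phi$ against $f_{n} = \|g\|^{q-1}\,u \cdot \mathbf{1}_{E_{n}}$, where $u$ is a Bochner-measurable $X$-valued unit vector field satisfying $\langle g(\omega), u(\omega)\rangle \geq (1-\epsilon)\|g(\omega)\|$; existence of $u$ requires a measurable selection argument, justified because the range of the Bochner-measurable $g$ is essentially separable. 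Letting $n \to \infty$ and $\epsilon \to 0$ yields $\|g\|_{q}^{q} \leq \|\phi\|\,\|g\|_{q}^{q/p}$, hence $\|g\|_{q} \leq \|\phi\|$. Combined with the isometric embedding of the first paragraph, this forces $J(g) = \phi$ and $\|g\|_{q} = \|\phi\|$. The case $p=1$, $q=\infty$ is handled by a minor variant of this truncation argument that controls $\|g\|_{\infty}$ directly from $\|\nu(A)\|/\mu(A)$ on small sets via Lebesgue-type differentiation of $\nu$.
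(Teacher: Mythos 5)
The paper does not actually prove this statement: it is quoted as a known background result (the classical duality theorem for Lebesgue--Bochner spaces, found in Diestel--Uhl), so there is no in-paper argument to compare against. Your proposal is the standard RNP-based proof and is essentially correct: the set function $\langle\nu(A),x\rangle=\phi(\mathbf{1}_{A}\otimes x)$, the bounded-variation estimate via near-norming vectors $x_{i}$ and H\"older, the Radon--Nikodym derivative $g$ obtained on finite-measure pieces and glued by $\sigma$-finiteness (which the theorem does need, and which is consistent with the paper's definition of the RNP), and the truncation-plus-measurable-selection argument for $\|g\|_{q}\le\|\phi\|$ are exactly the textbook steps. Two minor points to tighten. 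First, for $q=\infty$ simple functions are not norm-dense in $L^{\infty}(\Omega;X^{*})$, so the lower bound $\|J(g)\|\ge\|g\|_{\infty}$ should instead be obtained by testing on a positive finite-measure subset of $\{\|g\|>\alpha\}$ with your unit field $u$. Second, an abstract measure space carries no Lebesgue differentiation theorem, so the $p=1$ estimate $\|g\|_{\infty}\le\|\phi\|$ should not be phrased as ``differentiation of $\nu$''; it follows either from the same selection argument or from the observation that $\|\int_{A}g\,d\mu\|\le\|\phi\|\mu(A)$ for all $A$ forces $\|g\|\le\|\phi\|$ a.e. (reduce to the scalar case using a countable norming set adapted to the separable essential range of $g$). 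Neither issue affects the architecture of the proof.
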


It is now clear the importance of these results, since every finite-dimensional Banach space is reflexive for any norm.

\section{Lipschitz-free spaces over finite-dimensional Banach spaces}

Now we give some details on the study of Lipschitz-free spaces over finite-dimensional Banach spaces. As can be seen on \cite{W}, the Lipschitz-free space over $\R$ is already completely identified. Even if it is a known result, we recall the construction, as it will be necessary for the generalization that we will present for the case of upper dimensions.
\\In any case, we will need some results concerning differentiability properties of Lipschitz functions and measure theory. In the following, let $n\geq 1$ be any natural number

\begin{teo}[Rademacher]\label{Rad} Let $U\subseteq\R^{n}$  be an nonempty open set and $f:U\to\R$ be a Lipschitz-function. Then, the set $D_{f}\subseteq U$ where $f$ is Fr\'echet-differentiable has full measure (with respect to the Lebesgue measure over $U$).
\end{teo}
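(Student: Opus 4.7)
The plan is to prove Rademacher's theorem in three stages: first, existence of directional derivatives almost everywhere for each fixed direction; second, identification of the directional derivative with $\nabla f\cdot v$ for every direction in a countable dense family; and third, upgrading this countable G\^ateaux differentiability to full Fr\'echet differentiability, where the Lipschitz hypothesis is essential.

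For the first stage, fix $v\in S^{n-1}$ and consider the orthogonal hyperplane $v^{\perp}$. For each $y\in v^{\perp}$, the slice $t\mapsto f(y+tv)$ is Lipschitz on its interval of definition, hence of bounded variation, hence differentiable almost everywhere by the classical one-dimensional Lebesgue theorem. Fubini's theorem then yields that
\[ A_v:=\{x\in U : \partial_v f(x) \text{ exists}\} \]
has full Lebesgue measure in $U$. Applied to the canonical basis vectors, this shows that $\nabla f(x)$ exists for almost every $x\in U$.

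For the second stage, fix $v\in S^{n-1}$ and $\varphi\in C_c^{\infty}(U)$. Denoting by $L$ the Lipschitz constant of $f$, the uniform bound $\left|\frac{f(x+tv)-f(x)}{t}\right|\leq L$ allows dominated convergence, giving
\[ \int_U \partial_v f(x)\,\varphi(x)\,dx = \lim_{t\to 0}\int_U\frac{f(x+tv)-f(x)}{t}\varphi(x)\,dx = -\int_U f(x)\,\partial_v\varphi(x)\,dx. \]
Writing $\partial_v\varphi=\sum_i v_i\partial_i\varphi$ and integrating by parts in each coordinate against the Lipschitz function $f$, this expression equals $\int_U \nabla f(x)\cdot v\,\varphi(x)\,dx$. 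Since $\varphi$ is arbitrary, $\partial_v f(x)=\nabla f(x)\cdot v$ almost everywhere. Taking a countable dense family $\{v_k\}_k\subseteq S^{n-1}$, the set $E$ of points $x\in U$ at which $\nabla f(x)$ exists and $\partial_{v_k}f(x)=\nabla f(x)\cdot v_k$ holds for every $k$ still has full measure.

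The third stage is the main obstacle, and the point where the Lipschitz hypothesis becomes decisive. Fix $x\in E$ and $\epsilon>0$; choose a finite $\epsilon$-net $\{v_{k_1},\ldots,v_{k_N}\}$ in $S^{n-1}$ with indices drawn from our dense family, and $\delta>0$ such that
\[ \left|f(x+tv_{k_j})-f(x)-t\,\nabla f(x)\cdot v_{k_j}\right|\leq\epsilon|t| \]
for all $|t|<\delta$ and all $j=1,\ldots,N$. Given $h\in\R^n$ with $0<\|h\|<\delta$, write $h=\|h\|v$ with $v\in S^{n-1}$ and pick $j$ with $\|v-v_{k_j}\|<\epsilon$. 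Decomposing
\begin{align*}
f(x+h)-f(x)-\nabla f(x)\cdot h &= [f(x+h)-f(x+\|h\|v_{k_j})]\\
&\quad + [f(x+\|h\|v_{k_j})-f(x)-\|h\|\nabla f(x)\cdot v_{k_j}]\\
&\quad + \|h\|\nabla f(x)\cdot(v_{k_j}-v),
\end{align*}
the three summands are bounded respectively by $L\|h\|\epsilon$, $\epsilon\|h\|$, and $\|\nabla f(x)\|\|h\|\epsilon$. Dividing by $\|h\|$ and letting $\epsilon\to 0$ yields Fr\'echet differentiability of $f$ at $x$ with derivative $v\mapsto\nabla f(x)\cdot v$, completing the proof.
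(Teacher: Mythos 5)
The paper does not actually prove this statement: Rademacher's theorem is quoted as a classical result and used as a black box throughout, so there is no internal proof to compare against. Your argument is the standard textbook proof (the one in Evans--Gariepy, for instance), and it is correct. The three-stage structure --- a.e.\ existence of each fixed directional derivative via the one-dimensional Lebesgue differentiation theorem and Fubini; the identification $\partial_v f=\nabla f\cdot v$ a.e.\ by testing against $C_c^\infty$ functions and exploiting the linearity of $v\mapsto\partial_v\varphi$; and the compactness/equi-Lipschitz argument that upgrades a.e.\ differentiability along a countable dense set of directions to Fr\'echet differentiability --- is exactly the classical route, and your final three-term decomposition over an $\epsilon$-net is precisely where the Lipschitz constant enters irreplaceably, as you observe. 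Two small points should be made explicit for the proof to be complete: first, the application of Fubini in the first stage requires the set $\{x\in U:\partial_v f(x)\text{ exists}\}$ to be Lebesgue measurable, which holds because the upper and lower Dini derivatives along $v$ are pointwise $\limsup$/$\liminf$ over rational $t$ of continuous functions and hence Borel; second, in the second stage the translate $x\mapsto x+tv$ must remain in $U$ on the support of $\varphi$, which is guaranteed for $|t|$ small since $\mathrm{supp}\,\varphi$ is compact in the open set $U$. Neither is a gap in the idea, only in the write-up.
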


\subsection{The case $n=1$}

In this section, we will construct a bijective linear isometry between the spaces $L^{\infty}(I)$ and $Lip_{0}(I)$ for every nonempty open interval $I\subseteq\R$, in order to conclude that $\mathcal{F}(I)\equiv L^{1}(I)$. We can always assume that $0\in I$.
\\Consider the operator $T:L^{\infty}(I)\to Lip_{0}(I)$ given by
\[ Tg(x):=\int_{0}^{x}g(t) dt,\,\text{for every } x\in I. \]
It is clear that this definition does not depend of the chosen class representant of $g$, and that the integrals are well defined. Also, it is easy to see that this operator is linear. We note now that for every $x,y\in I$ with $x\neq y$ and every $g\in L^{\infty}(I)$ we have that
\[ |Tg(x)-Tg(y)| = \left|\int_{y}^{x}g(t)dt\right| \leq \|g\|_{\infty}|x-y|. \]
Then, $Tg$ is a Lipschitz function with $\|Tg\|_{L}\leq\|g\|_{\infty}$. We see now that $T$ is biyective.
\\Let $g\in L^{\infty}(I)$ be such that $Tg=0$. Then, for every $x,y\in I$ with $x>y$ we will have that
\[ \int_{y}^{x} g(t) dt = Tg(x)-Tg(y) = 0, \]
so we conclude that $g=0$, and $T$ is injective.
\\Now, let $f\in Lip_{0}(I)$ any Lipschitz function. Thanks to the Theorem~\ref{Rad}, we know that $f'$ is defined a.e. on $I$. Moreover, it is easy to show that $f'\in L^{\infty}(I)$ and $\|f'\|_{\infty}\leq\|f\|_{L}$. Finally, we see that for every $x\in I$
\[ Tf'(x) = \int_{0}^{x} f'(t) dt = f(x)-f(0) = f(x). \]
Then, $Tf' = f$ and $T$ is surjective.
\\Finally, we easily note that $T^{-1}f=f'$ and that $T^{-1}$ is continuous, with $\|T^{-1}f\|_{\infty}\leq\|f\|_{L}$. We conclude that $T$ is an isometry between $L^{\infty}(I)$ and $Lip_{0}(I)$.
\\It is known that $L^{1}(I)^{*}\equiv L^{\infty}(I)$. Using this fact, we now show that $T$ is actually $w^{*}$-$w^{*}$ continuous. Let $(g_{n})_{n\in\N}$ be a $w^{*}$-convergent sequence on $L^{\infty}(I)$ and let $g\in L^{\infty}(I)$ be its $w^{*}$-limit. Then, for each $x\in I$ we have that
\[ \langle Tg_{n},\delta(x) \rangle = \int_{0}^{x}g_{n}(t) dt = \langle g_{n},\mathds{1}_{[0,x]} \rangle. \]
As $\mathds{1}_{[0,x]}\in L^{1}(I)$ and $(g_{n})_{n\in\N}$ is $w^{*}$-convergent, we deduce that
\[ \langle Tg_{n},\delta(x) \rangle \rightarrow \langle g,\mathds{1}_{[0,x]} \rangle = \int_{0}^{x}g(t) dt = \langle Tg,\delta(x) \rangle \]
Then, passing through linear combinations and limits, we conclude that $Tg_{n}\overset{*}{\rightharpoonup} Tg$ whenever $g_{n}\overset{*}{\rightharpoonup} g$. We conclude in view of the following well known result.

\begin{prop}
Let $X,Y$ be two Banach spaces. Let $T:Y^{*}\to X^{*}$ be a linear bounded operator. Suppose that $T$ is weak*-weak* continuous. Then, there exists a linear bounded operator $S:X\to Y$ such that $S^{*}=T$. Moreover, if $T$ is a bijective isometry, so is $S$.
\end{prop}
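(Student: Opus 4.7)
The plan is to build $S$ pointwise, leveraging the classical fact that the weak*-continuous linear functionals on $Y^{*}$ are precisely the evaluations $y^{*}\mapsto\langle y^{*},y\rangle$ for $y\in Y$. Fix $x\in X$ and consider the map $\varphi_{x}:Y^{*}\to\R$ defined by $\varphi_{x}(y^{*})=\langle Ty^{*},x\rangle$. Since $T$ is weak*-weak* continuous and the evaluation against $x$ is weak*-continuous on $X^{*}$, $\varphi_{x}$ is a weak*-continuous linear functional on $Y^{*}$. Hence there exists a unique $S(x)\in Y$ with
\[
\langle y^{*},S(x)\rangle = \langle Ty^{*},x\rangle \qquad \text{for every } y^{*}\in Y^{*}.
\]
This identity is already the statement $S^{*}=T$. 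Linearity of $S$ follows from uniqueness of $S(x)$ and the linearity of $T$, while boundedness comes from $\|S(x)\|=\sup_{\|y^{*}\|\leq 1}|\langle Ty^{*},x\rangle|\leq\|T\|\,\|x\|$, so $\|S\|\leq\|T\|$.

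For the moreover clause, assume $T$ is a bijective isometry. Then $T$ sends the closed unit ball of $Y^{*}$ bijectively onto the closed unit ball of $X^{*}$, so the same supremum collapses to
\[
\|S(x)\| = \sup_{\|y^{*}\|\leq 1}|\langle Ty^{*},x\rangle| = \sup_{\|x^{*}\|\leq 1}|\langle x^{*},x\rangle| = \|x\|,
\]
showing that $S$ is an isometric embedding; in particular $S$ is injective with closed range. For surjectivity I would invoke the standard duality between range and annihilator: injectivity of $S^{*}=T$ forces $S(X)$ to be dense, since any $y^{*}\in Y^{*}$ annihilating $S(X)$ satisfies $\langle Ty^{*},x\rangle=0$ for all $x\in X$, hence $Ty^{*}=0$ and therefore $y^{*}=0$, and Hahn-Banach finishes the argument. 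Dense range plus closed range gives $S(X)=Y$, so $S$ is a bijective isometry.

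The only non-computational ingredient is the representation of weak*-continuous functionals on a dual space, which I would invoke without reproof as a consequence of the bipolar theorem; every other step is a short direct verification, so I do not expect a serious obstacle.
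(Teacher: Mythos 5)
Your proof is correct. Note that the paper itself offers no proof of this proposition --- it is invoked as a ``well known result'' immediately after the construction of $T$ in the one-dimensional case --- so there is no argument of the author's to compare against. Your route is the standard one: you define $S(x)$ via the representation of weak*-continuous functionals on $Y^{*}$ as evaluations at points of $Y$, read off $S^{*}=T$ from the defining identity, get $\|S\|\leq\|T\|$ from the canonical isometric embedding of $Y$ into $Y^{**}$, and in the bijective-isometry case obtain that $S$ is an isometric embedding (since $T$ maps $B_{Y^{*}}$ onto $B_{X^{*}}$) with dense range (since $T=S^{*}$ is injective, via Hahn--Banach), hence onto. Every step checks out; the only external ingredients are the bipolar-theorem characterization of weak*-continuous functionals and Hahn--Banach, both of which are legitimately cited without reproof.
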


\subsection{The case $n>1$}

Considering the construction for the unidimensional case, our goal is to generalize it for the $n$-dimensional case. The main idea is to use again Rademacher's theorem in order to identify $Lip_{0}(U)$ with some subspace of $L^{\infty}(U;(\R^{n})^{*})$ when $U\subseteq\R^{n}$ is a nonempty open convex set (from here on, $L^{p}(U;X)$ will denote the space $L^{p}(U,\mathcal{L}(U),\lambda^{(n)};X)$, where $\mathcal{L}(U)$ stands for the Lebesgue-measurable sets on $U$, $\lambda^{(n)}$ stands for the $n$-dimensional Lebesgue-measure and $X$ is a Banach space). We may assume that $0\in U$.

\subsubsection{Identification of $Lip_{0}(U)$}

In this section, we show that $Lip_{0}(U)$ is linearly isometric to a subspace of $L^{\infty}(U;(\R^{n})^{*})$. This construction can also be found in the author's master thesis (\cite{tesis}, in spanish).
\\Thanks to Theorem~\ref{Rad}, we know that every Lipschitz function $f:U\to\R$ is Fr\'echet-differentiable almost everywhere (with respect to the Lebesgue measure).
\\Moreover, it's clear that if $f$ is Lipschitz and $x\in U$ is such that $f$ is differentiable at $x$, then $\|\nabla f(x)\|_{*}\leq\|f\|_{L}$ and with this $\nabla f:U\to(\R^{n})^{*}$ (where we assume that $\nabla f(x)=0$ if $f$ is not differentiable at $x$) is an element of $\mathcal{L}^{\infty}(U;(\R^{n})^{*})$.
\\We will first consider the following result from measure theory.

\begin{prop}\label{fmset}
Let $A\subseteq U$ be a set of full measure. Then, for every $x\in U$
\[ A_{x}:=\{y\in U\,:\, x+t(y-x)\in A \text{ a.e. on } [0,1]\} \]
is a set of full measure.
\end{prop}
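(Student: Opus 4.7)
The plan is to reformulate the statement as the vanishing of a double integral and then invoke Tonelli's theorem. Set $N := U \setminus A$, so that $\lambda^{(n)}(N) = 0$ by hypothesis, and fix $x \in U$. I would introduce the auxiliary set
\[ E := \{(t,y) \in [0,1] \times U : x + t(y-x) \in N\}, \]
which is Lebesgue measurable in $[0,1] \times U$ by continuity of the affine parametrization $\Phi(t,y) := x + t(y-x)$. The complement of $A_{x}$ in $U$ is precisely the set of $y \in U$ for which the one-dimensional slice $E_{y} := \{t \in [0,1] : (t,y) \in E\}$ has positive Lebesgue measure, so the statement reduces to showing $\lambda^{(1)}(E_{y}) = 0$ for almost every $y \in U$.

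To obtain this, I would first estimate the $t$-slice $E^{t} := \{y \in U : x + t(y-x) \in N\}$ for $t \in (0,1]$. The substitution $z = x + t(y-x)$ is an affine diffeomorphism of $\R^{n}$ with Jacobian $t^{n}$, hence
\[ \lambda^{(n)}(E^{t}) \leq t^{-n}\lambda^{(n)}(N) = 0. \]
Integrating over $t \in (0,1]$ and applying Tonelli's theorem to $\1_{E}$ yields
\[ 0 = \int_{0}^{1} \lambda^{(n)}(E^{t}) dt = \int_{U} \lambda^{(1)}(E_{y}) d\lambda^{(n)}(y), \]
and the non-negativity of the integrand then forces $\lambda^{(1)}(E_{y}) = 0$ for a.e.\ $y \in U$, i.e.\ $y \in A_{x}$.

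The only point requiring mild care is the measurability of $E$ and of the slices entering Tonelli's theorem. Writing $N = B \cup Z$ modulo a Borel null set, the preimage $\Phi^{-1}(B) \cap ([0,1] \times U)$ is Borel while the contribution of $Z$ is controlled by the same Jacobian bound applied to a Borel null cover, so $E$ is Lebesgue measurable and slice measurability for a.e.\ $y$ follows from the standard Fubini/Tonelli formalism. Apart from this routine verification, the argument is really just the one-line Tonelli computation above, and I do not foresee any further obstacle.
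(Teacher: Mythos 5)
Your argument is correct, but it runs along a genuinely different route from the paper's. The paper fixes $x=0$, passes to spherical coordinates centered at the base point, and shows that for almost every direction $v\in\mathbb{S}^{n-1}$ the radial line $r\mapsto rv$ meets $A$ in full linear measure up to the boundary function $R(v)$; it then uses the observation that $A_{0}$ is star-shaped to upgrade "$R(v)v\in A_{0}$" to "$rv\in A_{0}$ for all $r\in[0,R(v)]$" and integrates back. You instead work on the product $[0,1]\times U$ with the contraction $\Phi(t,y)=x+t(y-x)$: the $t$-slice of the bad set is null for each $t\in(0,1]$ because the affine map has Jacobian $t^{n}$, so Tonelli forces almost every $y$-slice to be null, which is exactly membership in $A_{x}$. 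Your version is shorter, dispenses with the surface measure on $\mathbb{S}^{n-1}$ and with the function $R(v)$ (whose good behaviour silently uses that $U$ is star-shaped about $x$), and makes the measurability bookkeeping explicit via a Borel null cover of $U\setminus A$ -- a point the paper glosses over when it integrates $\mathds{1}_{A}$ and $\mathds{1}_{A_{0}}$ in polar coordinates. What the paper's proof buys in exchange is a more geometric picture: it exhibits $A_{x}$ as containing full rays through $x$ in almost every direction, which is marginally more information than the almost-everywhere statement, though nothing in the sequel uses it. Both proofs are, at bottom, Fubini--Tonelli in different coordinate systems adapted to the pencil of segments through $x$.
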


\begin{proof}
Without loss of generality, we can asume that $x=0$. It suffices to prove that for every $R>0$, $\lambda^{(n)}(B_{R}\cap A_{0})=\lambda^{(n)}(B_{R})$, where $B_{R}=B_{2}(0,R)\cap U$ and $B_{2}(0,R)$ is the ball of $\R^{n}$ with the Euclidean norm. As $A$ is of full measure, we know that $\lambda^{(n)}(B_{R}\cap A)=\lambda^{(n)}(B_{R})$. Then, using spherical coordinates (where $dv$ denotes the surface measure on $\mathbb{S}^{n-1}$) we have that
\[ \int_{\mathbb{S}^{n-1}}\int_{0}^{R(v)} r^{n-1} dr dv = \int_{\mathbb{S}^{n-1}}\int_{0}^{R(v)} \mathds{1}_{A}(rv)) r^{n-1} dr dv, \]
where $R(v)=\sup\{r\in[0,R]\,:\,rv\in U\}$. This yields that necessarily for almost every $v\in\mathbb{S}^{n-1}$
\[ \int_{0}^{R(v)} \mathds{1}_{A}(rv) r^{n-1} dr = \int_{0}^{R(v)} r^{n-1} dr. \]
Let $\Sigma\subseteq\mathbb{S}^{n-1}$ be the set of points where the last equality is true. Considering that, we will have that for every $v\in\Sigma$, $rv\in A$ for almost every $r\in [0,R(v)]$, or equivalently $R(v)v\in A_{0}$. We easily see that $A_{0}$ is star-shaped, so actually we have that $rv\in A_{0}$ for every $r\in[0,R(v)]$, whenever $v\in\Sigma$. Considering this, we deduce that
\[ \lambda^{(n)}(B_{R}\cap A_{0}) = \int_{\mathbb{S}^{n-1}}\int_{0}^{R(v)}\mathds{1}_{A_{0}}(rv) r^{n-1}dr dv \]
\[ = \int_{\Sigma}\int_{0}^{R(v)} r^{n-1} dr dv = \int_{\mathbb{S}^{n-1}}\int_{0}^{R(v)} r^{n-1} dr dv = \lambda^{(n)}(B_{R}), \]
for every $R>0$. Then, $A_{0}$ is of full measure.
\end{proof}

To proceed in analogy to the unidimensional case, consider the following definition

\begin{defn}\label{fg}
For every $x\in U$, we define the operator $T_{x}$ over $L^{\infty}(U;(\R^{n})^{*})$ with values on the quotient of $\R^{U}$ with respect to equality almost everywhere as
\[ (T_{x}g)(y):=\int_{0}^{1}\langle g(x+t(y-x)),y-x \rangle dt \]
for every $y\in U$ such that the integral is well-defined. Otherwise, we set $(T_{x}g)(y)=0$.
\end{defn}

\begin{obs}
In virtue of Proposition~\ref{fmset}, for every $x\in U$ the integrals on the previous definition are well-defined for almost every $y\in U$, because the sets
\[ A=\{ y\in U\,:\, \|g(x+t(y-x))\|_{*}\leq\|g\|_{\infty} \text{ a.e. on } [0,1] \} \]
are of full measure.
\end{obs}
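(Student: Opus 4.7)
The plan is to show that the set $A$ has full Lebesgue measure in $U$ by applying Proposition~\ref{fmset} to the natural full-measure set determined by the essential bound on $g$; once this is established, integrability of the integrand in $t$ will follow from a straightforward duality estimate.

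First, fix a measurable representative of the class $g \in L^{\infty}(U;(\R^{n})^{*})$ and consider
\[ E := \{ z \in U : \|g(z)\|_{*} \leq \|g\|_{\infty} \}. \]
By the definition of the essential supremum, $E$ has full measure in $U$. Applying Proposition~\ref{fmset} with this choice of $E$, for every $x \in U$ the set
\[ E_{x} = \{ y \in U : x + t(y-x) \in E \text{ a.e. on } [0,1] \} \]
has full measure as well. Unwinding the definition of $E$, the condition $x+t(y-x) \in E$ is precisely $\|g(x+t(y-x))\|_{*} \leq \|g\|_{\infty}$, so $E_{x}$ coincides with the set $A$ appearing in the remark.

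Next, for any $y \in A$, the duality pairing estimate
\[ |\langle g(x+t(y-x)), y-x \rangle| \leq \|g(x+t(y-x))\|_{*}\,\|y-x\| \leq \|g\|_{\infty}\,\|y-x\| \]
holds for almost every $t \in [0,1]$. This shows that the integrand is essentially bounded on $[0,1]$ and hence Lebesgue integrable on this compact interval, so the expression $(T_{x}g)(y)$ is a well-defined real number for every $y \in A$, and therefore for almost every $y \in U$.

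The main subtlety to watch out for is the measurability of $t \mapsto g(x+t(y-x))$ as a function on $[0,1]$, since the composition of a Lebesgue-measurable function with a continuous map is not automatically Lebesgue measurable. The natural fix is to work from the outset with a Borel-measurable representative of $g$ (which exists for any class in $L^{\infty}(U;(\R^{n})^{*})$), so that the composition with the continuous affine parametrization $t \mapsto x+t(y-x)$ is itself Borel measurable. With this convention in force, the pointwise bound above provides integrability and completes the justification claimed in the remark.
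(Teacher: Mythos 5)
Your argument is correct and is exactly the justification the paper intends (the remark is stated without an explicit proof, but the implicit reasoning is precisely to apply Proposition~\ref{fmset} to the full-measure set where $\|g(z)\|_{*}\leq\|g\|_{\infty}$ and then bound the integrand by duality). Your additional remark about choosing a Borel representative to guarantee measurability of $t\mapsto g(x+t(y-x))$ is a sound and worthwhile precaution that the paper glosses over.
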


\begin{prop}
For every $x\in U$, $T_{x}$ is well-defined (that is, it does not depend on the chosen class representant) and is linear.
\end{prop}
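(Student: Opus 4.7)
The plan is to prove both assertions by reducing everything to properties of full-measure subsets of $U$ and then invoking Proposition~\ref{fmset} to guarantee that the relevant integrals can be compared on a full-measure set of $y$'s.

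First, for well-definedness, I would start with two representatives $g_{1},g_{2}\in\mathcal{L}^{\infty}(U;(\R^{n})^{*})$ of the same class, so that the set $A=\{z\in U\,:\,g_{1}(z)=g_{2}(z)\}$ has full measure in $U$. By Proposition~\ref{fmset}, the associated set $A_{x}=\{y\in U\,:\,x+t(y-x)\in A \text{ a.e. on }[0,1]\}$ is also of full measure. Intersecting with the full-measure set produced by the remark after Definition~\ref{fg} applied to both $g_{1}$ and $g_{2}$, I obtain a full-measure set of $y$'s for which both integrals defining $(T_{x}g_{1})(y)$ and $(T_{x}g_{2})(y)$ are well-defined and, on the segment from $x$ to $y$, the integrands $\langle g_{1}(x+t(y-x)),y-x\rangle$ and $\langle g_{2}(x+t(y-x)),y-x\rangle$ coincide for almost every $t\in[0,1]$. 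Hence the two integrals agree for almost every $y\in U$, and $T_{x}g_{1}=T_{x}g_{2}$ as elements of the quotient, which is exactly what well-definedness asks.

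Next, for linearity, I would fix $\alpha,\beta\in\R$ and $g_{1},g_{2}\in L^{\infty}(U;(\R^{n})^{*})$, and apply the remark after Definition~\ref{fg} to the three functions $g_{1}$, $g_{2}$ and $\alpha g_{1}+\beta g_{2}$ to obtain a full-measure set of $y\in U$ on which the integrals defining $(T_{x}g_{1})(y)$, $(T_{x}g_{2})(y)$ and $(T_{x}(\alpha g_{1}+\beta g_{2}))(y)$ are simultaneously well-defined. On that set, bilinearity of the duality pairing $\langle\cdot,\cdot\rangle$ in the first argument, together with linearity of the Lebesgue integral on $[0,1]$, gives pointwise
\[ (T_{x}(\alpha g_{1}+\beta g_{2}))(y)=\alpha(T_{x}g_{1})(y)+\beta(T_{x}g_{2})(y), \]
which is the required identity in the quotient.

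I do not expect any serious obstacle here; the only delicate point is conceptual bookkeeping, namely ensuring that when I speak of ``a.e.'' I remain on a single full-measure set where every integral I write is meaningful. The function $t\mapsto x+t(y-x)$ has measure zero image for an individual $y$, so one cannot just claim equality of the integrands by pulling back a null set; Proposition~\ref{fmset} is precisely the tool that circumvents this difficulty, and once it is applied both statements reduce to the linearity of the one-dimensional Lebesgue integral.
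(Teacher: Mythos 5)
Your proof is correct and follows essentially the same route as the paper: both arguments apply Proposition~\ref{fmset} to the full-measure set where the two representatives agree, intersect with the full-measure sets on which the defining integrals exist, and conclude equality (respectively, linearity) of the integrals for almost every $y$. Your treatment of linearity is slightly more explicit than the paper's one-line remark, but the underlying idea is identical.
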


\begin{proof}
Let $g,h\in L^{\infty}(U;(\R^{n})^{*})$ be such that $g=h$ almost everywhere. We want to prove that $T_{x}g=T_{x}h$ almost everywhere. Thanks to Proposition~\ref{fmset}, we know that the following sets are of full measure
\begin{itemize}
\item $\{y\in U\,:\,\|g(x+t(y-x))\|_{*}\leq\|g\|_{\infty} \text{ a.e. on } [0,1]\}$.
\item $\{y\in U\,:\,\|g(x+t(y-x))\|_{*}\leq\|g\|_{\infty} \text{ a.e. on } [0,1]\}$.
\item $\{y\in U\,:\,g(x+t(y-x))=h(x+t(y-x)) \text{ a.e. on } [0,1]\}$.
\end{itemize}
Then, for every $y\in U$ belonging to the intersection of these three sets (which is again of full measure) we have that
\[ (T_{x}g)(y)=\int_{0}^{1}\langle g(x+t(y-x)),y-x \rangle dt = \int_{0}^{1}\langle h(x+t(y-x)),y-x \rangle dt = (T_{x}h)(y). \]
We deduce that $T_{x}g=T_{x}h$ almost everywhere. The fact that $T_{x}$ is lineal is direct, considering the set of points $y\in U$ such that every evaluation on $y$ is defined by the integral.
\end{proof}

We now recall that the restriction of every Lipschitz function is again a Lipschitz function. In this sense, if $f:U\to\R$ is a Lipschitz function, then we will have that for every $x,y\in U$
\[ \int_{0}^{1}f'(ty;y) dt - \int_{0}^{1}f'(tx;x) dt = f(y)-f(x) = \int_{0}^{1} f'(x+t(y-x);y-x) dt. \]
This last equality imposes a restriction over the elements of $L^{\infty}(U;(\R^{n})^{*})$ to represent the gradient of a Lipschitz function. With this in mind, consider the next definition.

\begin{defn}\label{LipComp}
We say that $g\in L^{\infty}(U;(\R^{n})^{*})$ is \textbf{Lipschitz-compatible} if for almost every $x,y\in U$ (that is, with respect to the product measure over $U\times U$)
\[ (T_{0}g)(y)-(T_{0}g)(x) = (T_{x}g)(y). \]
\end{defn}

\begin{obs}
It is not difficult to see that there are non-trivial functions that are Lipschitz-compatible. As an example, for every $\varphi\in\mathcal{C}^{\infty}_{0}(U)$, $\nabla\varphi$ is Lipschitz-compatible. The property of Lipschitz-compatibility can be understood as the fact that the functions defined by $L_{x}$ and $L_{0}$ are the same (up to some constant) for almost every $x\in U$. Moreover, it is easy to verify that this is actually a property of the class, since $T_{x}$ is linear.
\end{obs}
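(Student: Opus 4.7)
The plan is to address the three assertions in the remark separately, each via a direct and essentially one-step verification once the right viewpoint is adopted.

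For the example that $\nabla\varphi$ is Lipschitz-compatible when $\varphi\in\mathcal{C}^{\infty}_{0}(U)$, I would exploit the convexity of $U$ so that for any $x,y\in U$ the segment $[x,y]$ lies in $U$. The map $t\mapsto\varphi(x+t(y-x))$ is then a smooth function on $[0,1]$, and its derivative at $t$ equals $\langle\nabla\varphi(x+t(y-x)),y-x\rangle$, which is precisely the integrand in the definition of $(T_{x}\nabla\varphi)(y)$. The fundamental theorem of calculus then gives $(T_{x}\nabla\varphi)(y)=\varphi(y)-\varphi(x)$ for \emph{every} $x,y\in U$. Specialising to $x=0$ yields $(T_{0}\nabla\varphi)(y)=\varphi(y)-\varphi(0)$, and subtracting these two identities produces exactly the relation required by Definition~\ref{LipComp}, with equality holding pointwise (not merely almost everywhere).

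For the claim that Lipschitz-compatibility is a property of the equivalence class, I would invoke the preceding proposition directly: if $g$ and $g'$ agree almost everywhere, then for each fixed $x\in U$ we have $T_{x}g=T_{x}g'$ a.e. on $U$, and likewise $T_{0}g=T_{0}g'$ a.e. A single application of Fubini on $U\times U$ then shows that the measurable subset of $U\times U$ on which the compatibility identity fails is the same, up to a null set, for $g$ and $g'$. Hence the defining identity is invariant under the choice of representative, as claimed. Linearity of $T_{x}$ enters only in the form of sum and scalar-multiple manipulations; this step is essentially bookkeeping.

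The interpretive statement about $L_{x}$ and $L_{0}$ differing by a constant is just a rewriting: setting $L_{x}:=T_{x}g$, the compatibility identity reads $L_{x}(y)=L_{0}(y)-L_{0}(x)$ for a.e. $(x,y)\in U\times U$. By Fubini, for almost every $x\in U$ this holds for almost every $y\in U$, so $L_{x}$ and $L_{0}$ coincide up to the additive constant $-L_{0}(x)$ as elements of the quotient $\R^{U}/\!\sim$, which is the content of the remark. The only genuine obstacle I anticipate is the careful management of the various null sets: the quantifier ``for almost every $y$'' must be kept distinct from ``for almost every $(x,y)$'', and Fubini is the tool that converts between them whenever needed.
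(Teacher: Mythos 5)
Your proposal is correct and supplies exactly the argument the paper leaves implicit: the remark is stated without proof, and the convexity-plus-fundamental-theorem-of-calculus computation giving $(T_{x}\nabla\varphi)(y)=\varphi(y)-\varphi(x)$ pointwise, together with the Fubini bookkeeping (via the well-definedness proposition and linearity of $T_{x}$) for invariance under the choice of representative, is evidently what the author intends. The only point you gloss over, which the paper also takes for granted in Definition~\ref{LipComp}, is the joint measurability of $(x,y)\mapsto(T_{x}g)(y)$ on $U\times U$ needed to pass between slice-wise and product-measure null sets.
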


We would like to determine the image of Lipschitz-compatible functions via the operator $T_{0}$. To study the mentioned image, we need to introduce the next definition

\begin{defn}\label{essLip}
We define the space of \textbf{essentially Lipschitz} functions as
\[ \mathcal{L}ip(U):=\{f\in \R^{U}\,:\, \mathcal{L}(f)<+\infty \}, \]
where the \textbf{essential Lipschitz constant} is defined as
\[ \mathcal{L}(f)=\esssup_{\substack{x,y\in U\\x\neq y}} \frac{|f(y)-f(x)|}{\|y-x\|}, \]
where the essential supremum is taken with respect to the product measure over $U\times U$.
We will also say that $f\in\mathcal{L}ip_{0}(U)$ if $f\in\mathcal{L}ip(U)$ and there exists $K\geq\mathcal{L}(f)$ such that $|f(x)|\leq K\|x\|$ a.e. on $U$.
\end{defn}

\begin{lema}\label{isom}
For every $f\in\mathcal{L}ip_{0}(U)$ we have that
\begin{enumerate}
\item If $h=f$ almost everywhere, then $\mathcal{L}(h)=\mathcal{L}(f)$.
\item There exists a unique $h\in Lip_{0}(U)$ such that $h=f$ almost everywhere.
\end{enumerate}
In particular, $Lip_{0}(U)$ is linearly isometric to the quotient of $\mathcal{L}ip_{0}(U)$ with respect to equality almost everywhere.
\end{lema}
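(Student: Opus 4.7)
My plan is to treat the two enumerated items separately and then assemble the final isometry.

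Item (1) follows from the general principle that essential suprema are insensitive to null modifications. If $h = f$ almost everywhere on $U$ with null exceptional set $N$, then the two measurable quotients $|h(y)-h(x)|/\|y-x\|$ and $|f(y)-f(x)|/\|y-x\|$ agree off $(N\times U)\cup (U\times N)$, which has product measure zero in $U\times U$; hence their essential suprema coincide and $\mathcal{L}(h) = \mathcal{L}(f)$.

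For item (2), set $L = \mathcal{L}(f)$ and fix $K \geq L$ such that $|f(x)| \leq K\|x\|$ a.e. Applying Fubini to the null set $\{(x,y)\in U\times U : |f(y)-f(x)| > L\|y-x\|\}$ yields that
\[ G := \{x \in U : |f(y)-f(x)| \leq L\|y-x\| \text{ for a.e. } y \in U,\ \text{and}\ |f(x)| \leq K\|x\|\} \]
has full measure in $U$. The crucial step is to upgrade this to show that $f|_G$ is \emph{genuinely} $L$-Lipschitz. Given $x, y \in G$, convexity places the midpoint $m = \tfrac{1}{2}(x+y)$ in $U$, and for each small $\epsilon > 0$ I can choose $z \in B(m,\epsilon) \cap U$ outside both null obstruction sets associated with $x$ and with $y$; the triangle inequality then gives
\[ |f(x)-f(y)| \leq |f(x)-f(z)| + |f(z)-f(y)| \leq L(\|x-z\| + \|z-y\|) \leq L(\|x-y\| + 2\epsilon), \]
and letting $\epsilon \to 0$ delivers $|f(x)-f(y)| \leq L\|x-y\|$. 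Density of $G$ then extends $f|_G$ uniquely to an $L$-Lipschitz function $h:U\to\R$, and picking a sequence $G\ni x_n \to 0$ gives $h(0) = \lim f(x_n) = 0$ via $|f(x_n)|\leq K\|x_n\|$. Uniqueness of $h$ in $Lip_0(U)$ with $h = f$ a.e.\ is automatic, since two continuous functions coinciding on a full-measure (hence dense) set agree everywhere.

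For the concluding assertion, every $h\in Lip_0(U)$ lies in $\mathcal{L}ip_0(U)$ with $\mathcal{L}(h) = \|h\|_L$ (the essential supremum matches the supremum by continuity of the Lipschitz quotient on $\{x\neq y\}$, which is a nonempty open set of positive measure). Combined with (1) and (2), the map sending each equivalence class $[f]$ to its unique Lipschitz representative is well-defined, linear, bijective (with inverse $h\mapsto [h]$), and satisfies $\|h\|_L = \mathcal{L}(h) = \mathcal{L}(f)$, hence is a linear isometry. The main obstacle is the midpoint argument that upgrades essential Lipschitz behavior into pointwise Lipschitz behavior on $G$: this is precisely where convexity of $U$ (and not just openness) is indispensable, since it guarantees an interior midpoint around which one can find test points avoiding both null obstruction sets simultaneously.
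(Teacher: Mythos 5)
Your proof is correct and follows essentially the same route as the paper's: produce a full-measure set on which $f$ is genuinely $\mathcal{L}(f)$-Lipschitz, extend by density, and get uniqueness from continuity; your Fubini-plus-midpoint argument supplies the justification that the paper leaves implicit when it simply asserts the existence of the set $F$. One small remark: convexity is not actually indispensable for that step, since instead of a midpoint you could choose the test point $z$ in a small ball around $y$ itself (which lies in the open set $U$) avoiding the union of the two null obstruction sets, and let $z\to y$.
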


\begin{proof}
First we notice that $\mathcal{L}(\cdot)$ defines a seminorm on $\mathcal{L}ip_{0}(U)$. This is analogous to the fact that
\[ \esssup_{\omega\in\Omega} |g(\omega)| \]
defines a seminorm on $\mathcal{L}^{\infty}(\Omega)$. This directly proves the first part, since $\mathcal{L}$ is zero on almost everywhere null functions. For the second part, we have that there exists a set $F\subseteq U$ of full measure such that
\[ \frac{|f(x)-f(y)|}{\|x-y\|}\leq \mathcal{L}(f) \]
for every $x,y\in F$. Since $F$ is dense in $U$, define $h$ as the unique Lipschitz extension of $f_{F}$. It is clear that $h(0)=0$, because $f\in\mathcal{L}ip_{0}(U)$.
\\For the last part, we notice that from the proof of the second part we can also deduce that $\mathcal{L}(\cdot)$ defines a norm on the mentioned quotient and the operator that maps every $f\in\mathcal{L}ip_{0}(U)$ to its unique Lipschitz representant is linear, since it is defined by density. We deduce that the mentioned operator is in fact a linear isometry thanks to the first part of the lemma.
\end{proof}

Using this space, we can make a relation between Lipschitz-compatibility and Lipschitz functions in terms of the operator $T_{0}$.

\begin{defn}
We define $X$ as the subspace of $L^{\infty}(U;(\R^{n})^{*})$ of Lipschitz-compatible elements.
\end{defn}

\begin{prop}\label{imEssLip}
For every $g\in X$, $T_{0}g\in Lip_{0}(U)$ (in terms of the last isometry).
\end{prop}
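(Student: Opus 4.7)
The plan is to show that $T_{0}g$ lies in $\mathcal{L}ip_{0}(U)$, after which Lemma~\ref{isom} immediately provides a unique representative in $Lip_{0}(U)$. Two things need to be verified: the essential Lipschitz constant $\mathcal{L}(T_{0}g)$ is finite (and in fact bounded by $\|g\|_{\infty}$), and $T_{0}g$ satisfies the required growth condition at the origin.

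The first step is a pointwise bound on each $T_{x}g$. For every fixed $x\in U$, Proposition~\ref{fmset} ensures that the integral in Definition~\ref{fg} is well-defined on a full-measure set of $y$'s, and on that set
\[
|(T_{x}g)(y)|\leq\int_{0}^{1}\|g(x+t(y-x))\|_{*}\,\|y-x\|\,dt\leq\|g\|_{\infty}\|y-x\|.
\]
Specializing to $x=0$ already gives $|(T_{0}g)(y)|\leq\|g\|_{\infty}\|y\|$ for a.e.\ $y\in U$, which is precisely the growth condition in Definition~\ref{essLip} (with constant $K=\|g\|_{\infty}$).

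For the essential Lipschitz constant I would combine this bound with Lipschitz-compatibility. By definition of $X$, for a.e.\ $(x,y)\in U\times U$ one has
\[
(T_{0}g)(y)-(T_{0}g)(x)=(T_{x}g)(y),
\]
so it suffices to bound the right-hand side by $\|g\|_{\infty}\|y-x\|$ on a full-measure set of $(x,y)$'s with respect to the \emph{product} measure. Here is the delicate point: the pointwise bound above only gives the inequality ``for each $x$, for a.e.\ $y$''. To promote this to almost every $(x,y)$ one applies Tonelli to the set $E=\{(x,y)\in U\times U:|(T_{x}g)(y)|\leq\|g\|_{\infty}\|y-x\|\}$, whose every vertical slice $E_{x}$ has full measure in $U$; hence $E$ itself has full measure in $U\times U$. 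Intersecting $E$ with the compatibility set yields $|(T_{0}g)(y)-(T_{0}g)(x)|\leq\|g\|_{\infty}\|y-x\|$ for a.e.\ $(x,y)$, so $\mathcal{L}(T_{0}g)\leq\|g\|_{\infty}<\infty$. This places $T_{0}g$ in $\mathcal{L}ip_{0}(U)$, and Lemma~\ref{isom} finishes the argument.

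The main obstacle I expect is the measurability bookkeeping in the Tonelli step: one needs the set $E$ to be a product-measurable subset of $U\times U$, which in turn depends on joint measurability of $(x,y,t)\mapsto g(x+t(y-x))$. This is handled by the affine change of variables $(x,y,t)\mapsto(x+t(y-x),y,t)$ (with Jacobian $t^{n}$ for fixed $t>0$), which transports the essentially bounded function $g$ into a jointly measurable function on $U\times U\times [0,1]$; everything else, including the slicewise full-measure property of $E$, is a routine consequence of Proposition~\ref{fmset} applied at each $x$.
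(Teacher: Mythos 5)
Your proof is correct and follows essentially the same route as the paper: bound $|(T_{x}g)(y)|$ by $\|g\|_{\infty}\|y-x\|$ via Proposition~\ref{fmset}, invoke Lipschitz-compatibility to identify this with $|(T_{0}g)(y)-(T_{0}g)(x)|$, and conclude with Lemma~\ref{isom}. The only difference is that you make explicit the Tonelli step promoting the slicewise a.e.\ bound to a product-measure a.e.\ bound, which the paper leaves implicit.
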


\begin{proof}
In virtue of Proposition~\ref{fmset} and the definition of Lipschitz-compatibility, we have that for almost every $x,y\in U$
\[ |(T_{0}g)(y)-(T_{0}g)(x)| = |(T_{x}g)(y)|\leq\|g\|_{\infty}\|y-x\|. \]
Again thanks to Proposition~\ref{fmset}, we also have that for almost every $x\in U$
\[ |(T_{0}g)(x)|\leq \|g\|_{\infty}\|x\|.\]
Then, in virtue of Lemma~\ref{isom}, $T_{0}g\in Lip_{0}(U)$.
\end{proof}

We now prove the main result of this section, which states an identification for the space $Lip_{0}(U)$.

\begin{teo}
The spaces $X$ and $Lip_{0}(U)$ are linearly isometric.
\end{teo}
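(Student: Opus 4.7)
The natural candidate for the isometry is $T_0$ itself: by Proposition~\ref{imEssLip} it maps $X$ into $Lip_0(U)$ (via the identification of Lemma~\ref{isom}) and satisfies $\|T_0 g\|_L \leq \|g\|_\infty$. The plan is to establish surjectivity, injectivity, and the reverse bound $\|g\|_\infty \leq \|T_0 g\|_L$, thereby recognizing $T_0$ as a bijective linear isometry.

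For surjectivity (and as a bonus the reverse bound), I would start from $f \in Lip_0(U)$ and propose $g := \nabla f$ (extended by $0$ off the differentiability set). Rademacher's theorem gives $g \in L^\infty(U;(\R^n)^*)$ with $\|g\|_\infty \leq \|f\|_L$. The key computation is that, upon applying Proposition~\ref{fmset} to the differentiability set of $f$, for any fixed $x \in U$ and a.e. $y \in U$ the restriction $t \mapsto f(x + t(y-x))$ is absolutely continuous on $[0,1]$ with a.e. derivative $\langle \nabla f(x + t(y-x)), y-x\rangle$, so that integration delivers
\[ f(y) - f(x) = (T_x g)(y) \]
for a.e. $(x,y) \in U \times U$. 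Taking $x = 0$ gives $T_0 g = f$ in $Lip_0(U)$, while the full identity establishes that $g$ is Lipschitz-compatible, i.e.\ $g \in X$. Together with the standard bound $\|\nabla f\|_\infty \leq \|f\|_L$ this also produces the reverse isometric inequality.

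The main obstacle will be injectivity: showing that if $g \in X$ and $T_0 g = 0$ then $g = 0$ a.e. The plan is to exploit Lipschitz-compatibility, which forces $(T_x g)(y) = 0$ for a.e. $(x,y) \in U \times U$; writing $y = x + \tau v$ with $v \in \mathbb{S}^{n-1}$, $\tau > 0$, and substituting $s = t\tau$ in the defining integral converts this into
\[ \int_0^\tau \langle g(x + sv), v\rangle\, ds = 0 \]
for a.e. $(x,v,\tau)$. By Fubini and the Lipschitz dependence on $\tau$ (the integrand being essentially bounded), this vanishes for every admissible $\tau$, so that Lebesgue differentiation yields $\langle g(x + \tau v), v\rangle = 0$ for a.e. $\tau$. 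The decisive observation is that for fixed $v$ the condition depends only on $y = x + \tau v$, so slicing $U$ along lines parallel to $v$ and applying Fubini promotes this to $\langle g(y), v\rangle = 0$ for a.e. $y \in U$, valid for a.e. $v \in \mathbb{S}^{n-1}$; one more Fubini swap together with the linearity of $v \mapsto \langle g(y), v\rangle$ forces $g(y) = 0$. The hard part is organizing these successive Fubini applications and radial differentiations so as to pass cleanly from the a.e. vanishing of a full-measure family of line integrals to the pointwise (a.e.) vanishing of $g$ itself.
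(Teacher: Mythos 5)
Your proposal follows the same overall architecture as the paper's proof: the isometry is $T_{0}$ itself, its inverse is the gradient operator $Rf=\nabla f$, surjectivity comes from the identity $T_{0}(\nabla f)=f$ (obtained, exactly as in the paper, by applying Proposition~\ref{fmset} to the differentiability set of $f$ and integrating the directional derivative along segments), and the isometric equality follows from the two inequalities $\|T_{0}g\|_{L}\le\|g\|_{\infty}$ and $\|\nabla f\|_{\infty}\le\|f\|_{L}$. Where you genuinely depart from the paper is the injectivity step, and your version is the stronger one. The paper argues: if $T_{0}g(x)>0$ somewhere, then $g\ne 0$ on a set of positive measure; as written this establishes the implication $T_{0}g\ne 0\Rightarrow g\ne 0$, i.e.\ the (trivial) converse of injectivity, and it makes no use of Lipschitz-compatibility. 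Your argument runs in the correct direction: from $T_{0}g=0$ and compatibility you get $(T_{x}g)(y)=0$ for a.e.\ $(x,y)$, pass to polar coordinates $y=x+\tau v$, use the Lipschitz dependence on $\tau$ of the line integral to upgrade ``a.e.\ $\tau$'' to ``all admissible $\tau$'', differentiate to obtain $\langle g(x+\tau v),v\rangle=0$ for a.e.\ $\tau$, and then two Fubini slicings (first along lines parallel to $v$, then over $v\in\mathbb{S}^{n-1}$) together with the linearity of $v\mapsto\langle g(y),v\rangle$ force $g=0$ a.e. All of these steps are sound; the one place to write carefully is the line-slicing Fubini argument, where you should note that every $y\in U$ lies on a segment of positive length of the form $\{x+\tau v\}$ with $x\in U$ (since $U$ is open), so that the set $\{y\in U:\langle g(y),v\rangle\ne 0\}$ is genuinely forced to be null. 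Your route buys a complete and correctly oriented proof of injectivity --- which is also what legitimizes the identification $g=\nabla(T_{0}g)$ underlying the reverse norm inequality --- at the cost of measure-theoretic bookkeeping that the paper elides.
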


\begin{proof}
Let $T:X\to Lip_{0}(U)$ be the operator given by $T=T_{0}$. We have already proven that this is a bounded linear operator, with $\|T\|\leq 1$.
\\Consider now the operator $R:Lip_{0}(U)\to X$ given by $Rf=\nabla f$. Using Theorem~\ref{Rad} together with Proposition~\ref{fmset}, we see that this operator is well-defined (that is, $\nabla f$ is Lipschitz-compatible) and it's clearly linear, with $\|R\|\leq 1$ since $\|\nabla f\|_{\infty}\leq\|f\|_{L}$. Moreover, we have that $TR=Id_{Lip_{0}(U)}$ because for every $f\in Lip_{0}(U)$
\[ TRf = T\nabla f = T_{0}(\nabla f) = T_{0}g = f, \]
where the function $g:U\to\R^{n}$ is defined as follows. Let $D_{f}$ be the set of points where $f$ is differentiable
\[ g(x):=\left\{\begin{array}{cll}
\nabla f(x)&,&x\in D_{f}\\
f'(x;x)\cdot\frac{u_{x}}{\|x\|}&,& x\notin D_{f} \wedge x\neq 0 \wedge f'(x;x) \text{ exists}\\
0&,&\text{otherwise}
\end{array}\right. \]
where $u_{x}\in\R^{n}$ is such that $\|u_{x}\|_{*}=1$ and $\langle u_{x},x \rangle = \|x\|$. We see that $g=\nabla f$ almost everywhere and for any $x\in U\setminus\{0\}$, $\langle g(tx),x \rangle = f'(tx;x)$ a.e. on $[0,1]$, and then $T_{0}g=f$.
\\In particular, we have that $T$ is surjective. Suppose now that there exists $x\in U$ such that $Tg(x)>0$. As $f:=Tg\in Lip_{0}(U)$, we will have that there exists $\delta>0$ such that for almost every $y\in B(x,\delta)$
\[ 0<f(y)=\int_{0}^{1}\langle g(ty),y \rangle dt, \]
that is, for almost every $y\in B(x,\delta)$ there exists a positive measure subset of $[0,1]$ such that $g(ty)\neq 0$ on that subset. This implies that there exists a positive measure subset of $U$ such that $g\neq 0$ on that subset. Then, $T$ is injective. We deduce that $T$ is bijective with $T^{-1}=R$. Then $T$ is a linear isometry between $X$ and $Lip_{0}(U)$.
\end{proof}

\subsubsection{Identification of $\mathcal{F}(U)$}

As in the unidimensional case, we would like to use the operator $T$ from the previous section to obtain a linear isometry between $\mathcal{F}(U)$ and a primal space for $X$. For the unidimensional case we used the known fact that $L^{1}(I)$ is the unique (up to isometry) primal of $L^{\infty}(I)$ for any open interval $I\subseteq\R$. In our case we only know that $X$ is a dual space. To find a primal space for $X$, we first recall a classical result on Banach spaces theory

\begin{prop}\label{dualQuo}
Let $Z$ be a Banach space and $V$ a closed subspace of $Z$. Then the dual space $(Z/V)^{*}$ is linearly isometric to the annihilator of $V$
\[ V^{\perp}=\{z^{*} \in Z^{*}\,:\, \langle z^{*},v \rangle, \forall v\in V \}. \]
\end{prop}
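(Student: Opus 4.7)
The plan is to construct an explicit linear isometry $\Phi : V^{\perp} \to (Z/V)^{*}$ by using the universal property of the quotient. Writing $q : Z \to Z/V$ for the canonical quotient map, define, for each $z^{*} \in V^{\perp}$, the functional $\Phi(z^{*})$ on $Z/V$ by $\langle \Phi(z^{*}), [z] \rangle := \langle z^{*}, z \rangle$, where $[z] = q(z)$. The first step is to check this is well-defined: if $[z_{1}]=[z_{2}]$ then $z_{1}-z_{2} \in V$ and so $\langle z^{*},z_{1}\rangle-\langle z^{*},z_{2}\rangle=\langle z^{*},z_{1}-z_{2}\rangle=0$ since $z^{*} \in V^{\perp}$. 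Linearity of $\Phi(z^{*})$ and linearity of $\Phi$ itself are then immediate.

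Next I would show that $\Phi$ is a bijection onto $(Z/V)^{*}$. Injectivity is straightforward: if $\Phi(z^{*})=0$, then $\langle z^{*},z \rangle = \langle \Phi(z^{*}),[z]\rangle = 0$ for every $z \in Z$, hence $z^{*}=0$. For surjectivity, given any $\varphi \in (Z/V)^{*}$, set $z^{*} := \varphi \circ q \in Z^{*}$. Since $q$ vanishes on $V$, so does $z^{*}$, so $z^{*} \in V^{\perp}$, and by construction $\Phi(z^{*}) = \varphi$.

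The main technical point is the isometry equality $\|\Phi(z^{*})\| = \|z^{*}\|$. For the inequality $\|\Phi(z^{*})\| \leq \|z^{*}\|$, I would use the definition of the quotient norm $\|[z]\|_{Z/V} = \inf_{v \in V}\|z-v\|$: for any $v\in V$,
\[ |\langle \Phi(z^{*}),[z]\rangle| = |\langle z^{*},z-v\rangle| \leq \|z^{*}\|\,\|z-v\|, \]
and taking infimum over $v \in V$ gives $|\langle \Phi(z^{*}),[z]\rangle| \leq \|z^{*}\|\,\|[z]\|_{Z/V}$. For the reverse $\|z^{*}\| \leq \|\Phi(z^{*})\|$, I would use that $q$ is a contraction: for any $z \in Z$ with $\|z\| \leq 1$ we have $\|[z]\|_{Z/V} \leq 1$, and
\[ |\langle z^{*},z\rangle| = |\langle \Phi(z^{*}),[z]\rangle| \leq \|\Phi(z^{*})\|\,\|[z]\|_{Z/V} \leq \|\Phi(z^{*})\|, \]
so taking supremum over the closed unit ball of $Z$ gives $\|z^{*}\| \leq \|\Phi(z^{*})\|$.

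I do not expect any serious obstacle: the argument is entirely formal once the quotient norm is used correctly, and the only point where care is needed is the infimum trick in the first isometry inequality, which exploits precisely that $z^{*}$ annihilates $V$.
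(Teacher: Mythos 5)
Your proof is correct and complete: the map $\Phi(z^{*}) = $ ``the functional induced by $z^{*}$ on $Z/V$'' is well-defined precisely because $z^{*}$ annihilates $V$, the inverse is $\varphi \mapsto \varphi \circ q$, and the two norm inequalities (the infimum over $v \in V$ for $\|\Phi(z^{*})\| \leq \|z^{*}\|$, and the contractivity of $q$ for the reverse) are exactly the standard argument. The paper states this proposition as a recalled classical fact and gives no proof of its own, so there is nothing to compare against; your write-up supplies the missing textbook argument correctly. The only cosmetic remark is that the first isometry inequality is also what shows $\Phi(z^{*})$ is bounded, i.e.\ genuinely an element of $(Z/V)^{*}$, so logically it belongs before the bijectivity discussion rather than after it.
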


Considering this, we will compute the space $V:=X^{\perp}\cap L^{1}(U;\R^{n})$. In order to do this, we show a relation between $\mathcal{C}_{0}^{\infty}(U)$ and $X$. To this end, we will need the next lemma

\begin{lema}\label{dens}
Let $g\in X$ be any function and $(\varphi_{k})_{k\in\N}$ a sequence on $\mathcal{C}^{\infty}_{0}(U)$ with $\varphi_{k}(0)=0$ for every $k\in\N$. Then, the following propositions are equivalent
\begin{enumerate}
\item $(\exists f\in Lip_{0}(U),K>0)\, \varphi_{k}\overset{p.w.}{\longrightarrow} f \wedge \|\varphi_{k}\|_{L}\leq K \wedge g=\nabla f$.
\item $\nabla\varphi_{k} \overset{*}{\rightharpoonup} g \text{ on } L^{\infty}(U;(\R^{n})^{*})$.
\end{enumerate}
\end{lema}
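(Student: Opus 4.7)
The plan is to prove the two implications separately: the forward one by a classical integration-by-parts argument combined with an $L^{1}$-density argument, and the converse via Arzel\`a--Ascoli together with the isometric identification of $X$ with $Lip_{0}(U)$ already proved.

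For $(1)\Rightarrow(2)$, since $\|\nabla\varphi_{k}\|_{\infty}\leq\|\varphi_{k}\|_{L}\leq K$, the sequence $(\nabla\varphi_{k})$ is bounded in $L^{\infty}(U;(\R^{n})^{*})$. I would first test weak* convergence against $h\in\mathcal{C}^{\infty}_{0}(U;\R^{n})$: smooth integration by parts gives
\[ \int_{U}\langle\nabla\varphi_{k},h\rangle\,d\lambda^{(n)} = -\int_{U}\varphi_{k}\operatorname{div}(h)\,d\lambda^{(n)}, \]
and the uniform bound $|\varphi_{k}(x)|\leq K\|x\|$ on $\operatorname{supp}(h)$ together with the pointwise convergence $\varphi_{k}\to f$ lets dominated convergence pass to the limit, yielding $-\int_{U}f\operatorname{div}(h)\,d\lambda^{(n)}$. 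Integration by parts for the Lipschitz function $f$ (valid because $f\in W^{1,\infty}_{loc}(U)$ with distributional gradient equal to the almost-everywhere gradient $g=\nabla f$, by Rademacher and a standard mollification argument) identifies this limit as $\int_{U}\langle g,h\rangle\,d\lambda^{(n)}$. Extending to general $h\in L^{1}(U;\R^{n})$ is then a routine $\varepsilon/3$ argument using the density of $\mathcal{C}^{\infty}_{0}(U;\R^{n})$ in $L^{1}$ and the uniform bounds on $\|\nabla\varphi_{k}\|_{\infty}$ and $\|g\|_{\infty}$.

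For $(2)\Rightarrow(1)$, Banach--Steinhaus provides $K:=\sup_{k}\|\nabla\varphi_{k}\|_{\infty}<\infty$, and since $\varphi_{k}$ is smooth on the convex set $U$ one has $\|\varphi_{k}\|_{L}=\|\nabla\varphi_{k}\|_{\infty}\leq K$. The family $(\varphi_{k})$ is therefore equi-Lipschitz and, using $\varphi_{k}(0)=0$, uniformly bounded on compact subsets of $U$. By Arzel\`a--Ascoli, every subsequence admits a further subsequence $(\varphi_{k_{j}})$ converging uniformly on compacts to some $\tilde f\in Lip_{0}(U)$ with $\|\tilde f\|_{L}\leq K$. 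Applying the already proved $(1)\Rightarrow(2)$ to this subsequence gives $\nabla\varphi_{k_{j}}\overset{\ast}{\rightharpoonup}\nabla\tilde f$; together with the hypothesis $\nabla\varphi_{k_{j}}\overset{\ast}{\rightharpoonup} g$, uniqueness of weak* limits forces $g=\nabla\tilde f$ in $X$. Since $T_{0}:X\to Lip_{0}(U)$ is injective, $\tilde f=T_{0}g=:f$ is the same regardless of the subsequence, and the standard subsubsequence principle then forces $\varphi_{k}\to f$ uniformly on compacts of $U$, hence pointwise.

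The most delicate step is the integration by parts for the non-smooth function $f$ in the first implication, which requires identifying the distributional gradient of a Lipschitz function with its almost-everywhere Rademacher gradient --- a standard consequence of Rademacher's theorem plus mollification, but the only non-formal point in the argument. Everything else reduces to Banach--Steinhaus, Arzel\`a--Ascoli, and the isometric identification of $X$ with $Lip_{0}(U)$ established in the previous subsection.
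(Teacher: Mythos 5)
Your proof is correct, but both implications follow a genuinely different route from the paper's. For $(1)\Rightarrow(2)$, the paper does not integrate by parts against smooth vector fields: it tests $\nabla\varphi_{k}$ against indicators of coordinate cubes $H=I\times H'$, reduces to one-dimensional integrals by Fubini, and applies the fundamental theorem of calculus along segments to both $\varphi_{k}$ and the Lipschitz limit $f$; your version pairs with $h\in\mathcal{C}^{\infty}_{0}(U;\R^{n})$ and invokes the identification of the distributional gradient of a Lipschitz function with its a.e.\ Rademacher gradient. These are essentially the same fact in different packaging, and both variants require the same density-in-$L^{1}$ plus uniform-bound argument to pass to general $h\in L^{1}(U;\R^{n})$. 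The real divergence is in $(2)\Rightarrow(1)$: the paper proves pointwise convergence directly, writing $\varphi_{k}(x)$ as $\lim_{\varepsilon\to 0}\langle\nabla\varphi_{k},f_{\varepsilon}\rangle$ for explicit $L^{1}$ densities $f_{\varepsilon}$ supported on thin cylinders around the segment from $0$ to $x$, and showing the resulting scalar sequence is Cauchy; you instead extract locally uniform limits by Arzel\`a--Ascoli, identify every subsequential limit as $T_{0}g$ using the already-proved forward implication together with the injectivity of $T_{0}$, and finish with the sub-subsequence principle. Your route is arguably cleaner on the delicate interchange of the limits in $\varepsilon$ and $k$ (in the paper's Cauchy estimate the choice of $\varepsilon$ must be made uniformly in $k,j$ via the common Lipschitz bound, a point it passes over quickly), but it leans on the bijectivity of $T:X\to Lip_{0}(U)$ established in the previous subsection, whereas the paper's argument is self-contained and exhibits the limit function explicitly.
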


\begin{proof}
\textbf{[1 $\Rightarrow$ 2]} Suppose that $\|\varphi_{k}\|_{L}\leq K$ and that $\varphi_{k}$ converges pointwise to $f$, with $f$ such that $\nabla f=g$. It suffices to show that for every open bounded $n$-dimensional cube $H$ we have that
\[ \int_{H}\frac{\partial\varphi_{k}}{\partial x_{1}} d\lambda^{(n)} \longrightarrow \int_{H}\frac{\partial f}{\partial x_{1}} d\lambda^{(n)}. \]
Let $I=(a,b)$ be an interval such that $H=I\times H'$, where $H'$ is an open bounded $(n-1)$-dimensional cube. Then
\[ \int_{H}\frac{\partial\varphi_{k}}{\partial x_{1}} d\lambda^{(n)} = \int_{H'}\int_{a}^{b} \frac{\partial \varphi_{k}}{\partial x_{1}}(t,y) dt dy = \int_{H'} \left(\varphi_{k}(b,y)-\varphi_{k}(a,y)\right) dy. \]
We note that for every $y\in H'$, $\varphi_{k}(b,y)-\varphi_{k}(a,y)\longrightarrow f(b,y)-f(a,y)$. Moreover, we have that
\[ |\varphi_{k}(b,y)-\varphi_{k}(a,y)| \leq \|\varphi_{k}\|_{L}(b-a)\|e_{1}\|\leq K(b-a)\|e_{1}\|. \]
As $H'$ has finite measure, we have in virtue of the Dominated Convergence Theorem that
\[ \int_{H}\frac{\partial\varphi_{k}}{\partial x_{1}} d\lambda^{(n)} \longrightarrow \int_{H'} \left(f(b,y)-f(a,y)\right) dy, \]
but we also have that
\[ \int_{H'} (f(b,y)-f(a,y)) dy = \int_{H'}\int_{a}^{b} \frac{\partial f}{\partial x_{1}}(t,y) dt dy = \int_{H} \frac{\partial f}{\partial x_{1}} d\lambda^{(n)}, \]
which proves the direct implication.\\
\textbf{[2 $\Rightarrow$ 1]} Suppose that $\nabla \varphi_{k} \overset{*}{\rightharpoonup} g$ on $L^{\infty}(U;(\R^{n})^{*})$. We want to prove that $\varphi_{k}$ converges pointwise to some Lipschitz function. Let $x\in U\setminus\{0\}$ be any point and consider $V=\mathrm{span}\{x\}^{\perp}$. For any $\varepsilon>0$, we denote by $B_{V}(\varepsilon)$ the restriction to $V$ of the ball of radius $\varepsilon$ centered at $0$. Let $\lambda'$ be the $(n-1)$-dimensional Lebesgue measure over $V$. Then, in virtue of Theorem~\ref{ldt}, we have that 
\[ \varphi_{k}(x) = \lim_{\varepsilon\to 0} \frac{1}{\lambda'(B_{V}(\varepsilon))} \int_{B_{V}(\varepsilon)} \left(\varphi_{k}(x+y)-\varphi_{k}(y)\right) dy \]
\[ = \lim_{\varepsilon\to 0} \frac{1}{\lambda'(B_{V}(\varepsilon))} \int_{B_{V}(\varepsilon)}\int_{0}^{1} \langle \nabla\varphi_{k}(y+tx),x \rangle dt dy \]
\[ = \lim_{\varepsilon\to 0} \int_{H_{\varepsilon}} \left\langle \nabla\varphi_{k}, \frac{x}{\|x\|_{2}\lambda'(B_{V}(\varepsilon))} \right\rangle d\lambda = \lim_{\varepsilon\to 0} \langle \nabla\varphi_{k}, f_{\varepsilon} \rangle, \]
where 
\[ H_{\varepsilon}=\{z\in U\,:\,(\exists y\in B_{V}(\varepsilon),t\in[0,1])\,z=y+tx \} \]
and
\[ f_{\varepsilon} = \frac{x}{\|x\|_{2}\lambda'(B_{V}(\varepsilon))}\mathds{1}_{H_\varepsilon}. \]
We easily note that $f_{\varepsilon}\in L^{1}(U;\R^{n})$. In fact, it suffices to notice that given the definition of $H_{\varepsilon}$, $\lambda(H_{\varepsilon})=\|x\|_{2}\lambda'(B_{V}(\varepsilon))$. Then, for any $\eta>0$ and $k,j\in\N$ we have that
\[ |\varphi_{k}(x)-\varphi_{j}(x)| = \lim_{\varepsilon\to 0} |\langle\nabla\varphi_{k}-\nabla\varphi_{j},f_{\varepsilon}\rangle|. \]
Let $\varepsilon>0$ be such that
\[ \lim_{\varepsilon\to 0} |\langle\nabla\varphi_{k}-\nabla\varphi_{j},f_{\varepsilon}\rangle| \leq |\langle\nabla\varphi_{k}-\nabla\varphi_{j},f_{\varepsilon}\rangle|+\eta. \]
Then, there exists $N\in\N$ such that for every $j,k\geq N$
\[ |\varphi_{k}(x)-\varphi_{j}(x)| \leq |\langle\nabla\varphi_{k}-\nabla\varphi_{j},f_{\varepsilon}\rangle|+\eta \leq 2\eta,\]
and we deduce that for every $x\in U$, $(\varphi_{k}(x))_{k\in\N}$ is a Cauchy (hence convergent) sequence. In the following, we denote by $f$ the pointwise limit of this sequence. It is clear that $f(0)=0$. As $\nabla\varphi_{k}$ is $w^{*}$-convergent, $\|\nabla\varphi_{k}\|_\infty=\|\varphi_{k}\|_{L}$ are bounded, say by $K>0$ and then
\[ |f(x)-f(y)| = \lim_{k\to\infty} |\varphi_{k}(x)-\varphi_{k}(y)| \leq \limsup_{k\to\infty} \|\varphi_{k}\|_{L} \|x-y\| \leq K\|x-y\|, \]
so $f\in Lip_{0}(U)$. We now see that $\nabla\varphi_{k}\overset{*}{\rightharpoonup}\nabla f$ because of the direct implication and we conclude that $g=\nabla f$.
\end{proof}

\begin{cor}
The subspace $X':=\{ \nabla\varphi\,:\, \varphi\in\mathcal{C}_{0}^{\infty}(U) \}$ of $L^{\infty}(U;(\R^{n})^{*})$ is $w^{*}$-dense on $X$.
\end{cor}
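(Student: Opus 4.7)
The plan is to invoke Lemma \ref{dens} in the direction $(1)\Rightarrow(2)$: for an arbitrary $g\in X$, I intend to exhibit a sequence $(\varphi_k)\subset \mathcal{C}_0^\infty(U)$ with $\varphi_k(0)=0$, uniformly bounded Lipschitz seminorms, and $\varphi_k\to f$ pointwise on $U$, where $f:=T_0 g\in Lip_0(U)$. The lemma will then deliver $\nabla\varphi_k\overset{*}{\rightharpoonup}g$ in $L^\infty(U;(\R^n)^*)$, placing $g$ in the $w^*$-closure of $X'$ and completing the proof.

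To build the sequence I would first extend $f$ to $\tilde f:\R^n\to\R$ of the same Lipschitz constant $L:=\|f\|_L$ and with $\tilde f(0)=0$, using McShane's formula $\tilde f(x)=\inf_{y\in U}\{f(y)+L\|x-y\|\}$. Next, fix a standard nonnegative mollifier $\rho\in\mathcal{C}_c^\infty(\R^n)$ with $\int\rho=1$, set $\rho_\epsilon:=\epsilon^{-n}\rho(\cdot/\epsilon)$, and define
\[ \varphi_k(x):=(\tilde f*\rho_{1/k})(x)-(\tilde f*\rho_{1/k})(0),\qquad x\in U. \]
Each $\varphi_k$ is smooth on $\R^n$, satisfies $\varphi_k(0)=0$ by construction, and has $\|\varphi_k\|_L\leq L$, since convolution against a probability density never enlarges the Lipschitz constant. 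Uniform continuity of $\tilde f$ gives $\tilde f*\rho_{1/k}\to\tilde f$ uniformly on $\R^n$; consequently $\varphi_k(x)\to\tilde f(x)-\tilde f(0)=f(x)$ pointwise on $U$. Lemma \ref{dens} applied with $K=L$ then yields $\nabla\varphi_k\overset{*}{\rightharpoonup}g$, and since each $\nabla\varphi_k$ lies in $X'$, the density follows.

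The critical feature making the plan work is the uniform Lipschitz bound in $k$: both McShane extension and convolution against a probability density were chosen precisely because they do not enlarge the Lipschitz constant, which is what activates Lemma \ref{dens}. If $\mathcal{C}_0^\infty(U)$ is read strictly as compactly supported smooth functions rather than as smooth Lipschitz functions on $U$ vanishing at the basepoint, an additional cutoff step is necessary; this is then the main obstacle, because a multiplicative cutoff generically destroys the uniform Lipschitz bound and must be scaled against the linear growth $|\tilde f(x)|\leq L\|x\|$ (for instance via a radial cutoff $\chi(\|x\|/k)$ with gradient of size $O(1/k)$ when $U$ is unbounded, combined with a boundary cutoff tied to the Minkowski functional of $U$ in the bounded convex case) in order to keep $\|\nabla\varphi_k\|_\infty$ uniformly controlled.
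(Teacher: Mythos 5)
Your proposal is essentially the paper's own proof: the paper likewise takes $f=Tg$, mollifies, subtracts the value at $0$, and invokes Lemma~\ref{dens} in the direction $(1)\Rightarrow(2)$. Your version is in fact more careful than the paper's one-line argument, since you perform a McShane extension before convolving (so the mollification is well-defined near $\partial U$) and you explicitly flag the compact-support/cutoff issue, which the paper's proof silently ignores.
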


\begin{proof}
If $g\in X$, take $f=Tg$, a mollifier $(\rho_{k})_{k\in\N}\in \mathcal{C}^{\infty}_{0}(U)$ and define $\varphi_{k}=\rho_{k}*f-\rho_{k}*f(0)$. Then apply Lemma~\ref{dens}. On the other hand, if $\nabla\varphi_{k}\overset{*}{\rightharpoonup} g$ on $L^{\infty}(U;(\R^{n})^{*})$ we again apply Lemma~\ref{dens}, as we can always assume that $\varphi_{k}(0)=0$.
\end{proof}

\begin{prop}
Let $f\in L^{1}(U;\R^{n})$ be any function. Then $f\in X^{\perp}$ if and only if $\nabla\cdot f = 0$ in the sense of distributions.
\end{prop}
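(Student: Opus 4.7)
The plan is to exploit the previous corollary, which states that $X':=\{\nabla\varphi\,:\,\varphi\in\mathcal{C}_{0}^{\infty}(U)\}$ is $w^{*}$-dense in $X$, together with the fact that distributional vanishing of the divergence is defined precisely by testing against gradients of compactly supported smooth functions. Both directions then become essentially formal once this density is in hand.

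For the direct implication, I would observe that by the remark following Definition~\ref{LipComp}, every gradient $\nabla\varphi$ with $\varphi\in\mathcal{C}_{0}^{\infty}(U)$ lies in $X$. Consequently, if $f\in X^{\perp}$ then the canonical pairing between $L^{\infty}(U;(\R^{n})^{*})$ and $L^{1}(U;\R^{n})$ gives
\[ \int_{U}\langle\nabla\varphi(x),f(x)\rangle\,d\lambda^{(n)}(x)=0\qquad\text{for every }\varphi\in\mathcal{C}_{0}^{\infty}(U), \]
which is exactly the statement that $\nabla\cdot f=0$ in the sense of distributions.

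For the converse, suppose $\nabla\cdot f=0$ distributionally and let $g\in X$ be arbitrary. I would invoke the previous corollary to pick a sequence $(\varphi_{k})_{k\in\N}$ in $\mathcal{C}_{0}^{\infty}(U)$ with $\nabla\varphi_{k}\overset{*}{\rightharpoonup}g$ in $L^{\infty}(U;(\R^{n})^{*})$. Since $f\in L^{1}(U;\R^{n})$ is a predual element, testing the weak$^{*}$-convergence against $f$ yields
\[ \int_{U}\langle g(x),f(x)\rangle\,d\lambda^{(n)}(x)=\lim_{k\to\infty}\int_{U}\langle\nabla\varphi_{k}(x),f(x)\rangle\,d\lambda^{(n)}(x), \]
and the distributional vanishing of $\nabla\cdot f$ forces each term on the right-hand side to be zero. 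As $g\in X$ was arbitrary, this gives $f\in X^{\perp}$.

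The substantive work has already been carried out in Lemma~\ref{dens} and its corollary; once weak$^{*}$-density of the gradients of test functions inside $X$ is granted, the present proposition is a clean duality identification and the main obstacle is really just to remember to interpret $X^{\perp}$ in the canonical $L^{\infty}/L^{1}$ pairing (rather than inside the full bidual), which is exactly what the intersection $X^{\perp}\cap L^{1}(U;\R^{n})$ introduced in the preceding paragraph makes explicit.
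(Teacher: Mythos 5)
Your proposal is correct and follows essentially the same route as the paper: the paper likewise identifies the pairing $\langle\nabla\varphi,f\rangle$ with the distributional action of $\nabla\cdot f$ on $\varphi$ via integration by parts, and then invokes the $w^{*}$-density of $\{\nabla\varphi:\varphi\in\mathcal{C}^{\infty}_{0}(U)\}$ in $X$ to pass from test-function gradients to all of $X$. You merely spell out the two directions (and the legitimacy of testing the weak$^{*}$ limit against the predual element $f$) more explicitly than the paper's one-line conclusion.
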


\begin{proof}
For every $\varphi\in\mathcal{C}^{\infty}_{0}(U)$ we have that
\[ \langle\nabla\varphi,f\rangle = \int_{C} \langle\nabla\varphi,f\rangle d\lambda = \sum_{k=1}^{n} \int_{C} \frac{\partial\varphi}{\partial x_{k}}f_{k} d\lambda \]
\[ = -\sum_{k=1}^{n} \int_{C} \varphi\frac{\partial f_{k}}{\partial x_{k}} d\lambda = \int_{C} \varphi(\nabla\cdot f) d\lambda. \]
As the derivatives of $\mathcal{C}_{0}^{\infty}(U)$ functions are $w^{*}$-dense on $X$, we conclude the desired equivalence.
\end{proof}
Using Proposition~\ref{dualQuo}, now we know that $(L^{1}(U;\R^{n})/V)^{*}$ is linearly isometric to $V^{\perp}=\overline{X}^{w^{*}}=X$. In other words, $L^{1}(U;\R^{n})/V$ is a primal for $X$, where
\[V=\{f\in L^{1}(U;\R^{n})\,:\,\nabla\cdot f=0 \text{ in the sense of distributions}\}.\]
To conclude, we show that the operator $T$ defined in the previous section is continuous when we equip $Lip_{0}(U)$ and $X$ with their $w^{*}$-topologies, both spaces seen as the dual spaces of $\mathcal{F}(U)$ and $L^{1}(U;\R^{n})$, respectively.

\begin{prop}\label{wwcont}
The linear isometry $T:X\to Lip_{0}(U)$ is $w^{*}$-$w^{*}$ continuous.
\end{prop}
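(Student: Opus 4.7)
The plan is to mimic the argument at the end of the unidimensional case. It suffices to prove that whenever $(g_k)_{k\in\N}\subseteq X$ converges $w^{*}$ to $g\in X$, one has $Tg_{k}(x)\to Tg(x)$ for every $x\in U$. Indeed, by the uniform boundedness principle $K:=\sup_{k}\|g_{k}\|_{\infty}<\infty$, so $\sup_{k}\|Tg_{k}\|_{L}\le K$; combining pointwise convergence with this uniform bound, the linearity of $\delta$ and the density of $\mathrm{span}\{\delta(x):x\in U\}$ in $\mathcal{F}(U)$ will give $Tg_{k}\overset{*}{\rightharpoonup} Tg$ in $Lip_{0}(U)=\mathcal{F}(U)^{*}$. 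Separability of $L^{1}(U;\R^{n})/V$ makes the $w^{*}$-closed balls of $X$ metrizable, so working with sequences is enough.

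To obtain the pointwise convergence I would recycle the $L^{1}$-test functions $f_{\epsilon}$ introduced in the proof of Lemma~\ref{dens}. Fix $x\in U\setminus\{0\}$ and, for $\epsilon>0$ small enough that the tube $H_{\epsilon}$ and its translate $H_{\epsilon}+x$ both lie in $U$, consider $f_{\epsilon}\in L^{1}(U;\R^{n})$ as in that lemma. A Fubini computation in tube coordinates together with the Lipschitz-compatibility identity of Definition~\ref{LipComp} (valid for almost every $y\in B_{V}(\epsilon)$ by Proposition~\ref{fmset}) yields, for every $g\in X$,
\[ \langle g,f_{\epsilon}\rangle \;=\; \frac{1}{\lambda'(B_{V}(\epsilon))}\int_{B_{V}(\epsilon)}\bigl[(Tg)(y+x)-(Tg)(y)\bigr]\,dy. \]
Since $Tg$ denotes the Lipschitz representative and $Tg(0)=0$, averaging $Tg(x)-[(Tg)(y+x)-(Tg)(y)]$ over $y\in B_{V}(\epsilon)$ and using $\|Tg\|_{L}\le\|g\|_{\infty}$ twice produces the key uniform estimate
\[ |Tg(x)-\langle g,f_{\epsilon}\rangle|\;\le\;2\,\|g\|_{\infty}\sup_{y\in B_{V}(\epsilon)}\|y\|\;\le\;C_{x}\,\epsilon\,\|g\|_{\infty}, \]
where the constant $C_{x}$ absorbs the equivalence of norms on $\R^{n}$.

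With this estimate the $w^{*}$-$w^{*}$ continuity is a routine $\eta/3$-argument: given $\eta>0$, first choose $\epsilon$ so that $C_{x}\epsilon K<\eta/3$ (this controls $g$ and every $g_{k}$ simultaneously); then, because $f_{\epsilon}\in L^{1}(U;\R^{n})$ and $g_{k}\overset{*}{\rightharpoonup} g$, pick $N$ with $|\langle g_{k}-g,f_{\epsilon}\rangle|<\eta/3$ for $k\ge N$; chain the three bounds via the triangle inequality to conclude $|Tg_{k}(x)-Tg(x)|<\eta$. One then appeals to the proposition stated after the $n=1$ case to identify $T$ with the adjoint of a bounded operator from $\mathcal{F}(U)$ to $L^{1}(U;\R^{n})/V$.

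The main obstacle, and the reason why the unidimensional argument does not transfer verbatim, is that for $n\ge 2$ the functional $g\mapsto Tg(x)$ is \emph{not} of the form $g\mapsto\langle g,h_{x}\rangle$ for any single $h_{x}\in L^{1}(U;\R^{n})$: heuristically $h_{x}$ would be the singular one-dimensional measure along the segment $[0,x]$. Approximating this functional by the tube-averaged $L^{1}$ functions $f_{\epsilon}$ with a rate linear in $\epsilon$ and uniform in $\|g\|_{\infty}$ is exactly what permits the limit to be exchanged with the $w^{*}$-convergence of $(g_{k})$.
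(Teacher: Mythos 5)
Your strategy is genuinely different from the paper's. The paper proves instead that the inverse $R=T^{-1}=\nabla$ is $w^{*}$-$w^{*}$ continuous: taking $f_{k}\overset{*}{\rightharpoonup}f$ in $Lip_{0}(U)\equiv\mathcal{F}(U)^{*}$ immediately yields pointwise convergence and a uniform Lipschitz bound, which are fed into (the argument of) Lemma~\ref{dens} to get $\nabla f_{k}\overset{*}{\rightharpoonup}\nabla f$; continuity of $T$ then follows because a $w^{*}$-$w^{*}$ continuous surjective isometry between duals is the adjoint of an isomorphism and hence has a $w^{*}$-$w^{*}$ continuous inverse. That direction is the cheaper one, since testing $\nabla f_{k}$ against $L^{1}$ reduces to integration along coordinate segments of smooth (or merely Lipschitz) functions. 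Your direction must confront the fact that $g\mapsto Tg(x)$ is not represented by any single element of $L^{1}(U;\R^{n})$, which you handle with the tube functions $f_{\epsilon}$ and an $O(\epsilon)$ error uniform in $\|g\|_{\infty}$; the reduction to sequences, the use of the uniform boundedness principle, and the final $\eta/3$ argument are all fine.

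The one step whose justification fails as written is the identity $\int_{0}^{1}\langle g(y+tx),x\rangle\,dt=(Tg)(y+x)-(Tg)(y)$ for $\lambda'$-almost every $y\in B_{V}(\epsilon)$. Neither Definition~\ref{LipComp} nor Proposition~\ref{fmset} gives this: Lipschitz-compatibility is only known for almost every pair with respect to the product measure on $U\times U$, and the family $\{(y,y+x)\,:\,y\in B_{V}(\epsilon)\}$ is a null set there; likewise Proposition~\ref{fmset} produces subsets of full $n$-dimensional measure, which can still miss $\lambda'$-almost all of the $(n-1)$-dimensional cross-section $B_{V}(\epsilon)$. The identity is nevertheless true, but it needs a different argument: write $g=\nabla f$ a.e.\ with $f=Tg$ (available from the theorem identifying $X$ with $Lip_{0}(U)$); the exceptional set $N$ where $g\neq\nabla f$ or $f$ is not differentiable is $\lambda^{(n)}$-null, so Fubini in the tube coordinates $z=y+tx$ shows that for $\lambda'$-a.e.\ $y\in B_{V}(\epsilon)$ the fiber $\{t\in[0,1]\,:\,y+tx\in N\}$ is null, and then absolute continuity of the Lipschitz map $t\mapsto f(y+tx)$ gives the identity on those fibers. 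Alternatively, replace the cross-sectional average over $B_{V}(\epsilon)$ by an average over a full $n$-dimensional ball of base points, which preserves the $O(\epsilon)$ estimate and lets you invoke full-measure statements directly. With that repair your proof is complete.
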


\begin{proof}
This is equivalent to prove that $R=T^{-1}$ is $w^{*}$-$w^{*}$ continuous. Let $(f_{k})_{k\in\N}$ be a sequence in $Lip_{0}(U)$ such that $f_{k}\overset{*}{\rightharpoonup}f$. We want to prove that
$\nabla f_{k}\overset{*}{\rightharpoonup}\nabla f$, considering $X$ as the dual of $L^{1}(U;\R^{n})/V$. That is, for every $[h]\in L^{1}(U;\R^{n})/V$
\[ \langle\nabla f_{k},[h] \rangle \rightarrow \langle \nabla f,[h] \rangle. \]
Noting that this is equivalent to $\langle\nabla f_{k},h \rangle \rightarrow \langle\nabla f,h \rangle$ for every $h\in L^{1}(U;\R^{n})$, we can use Lemma~\ref{dens}. Then, it suffices to prove pointwise convergence and boundedness of $f_{k}$. These two are trivial from the fact that $f_{k}\overset{*}{\rightharpoonup} f$, since $Lip_{0}(U)\equiv \mathcal{F}(U)^{*}$.
\end{proof}

With the last proposition, we conclude that $L^{1}(U;\R^{n})/V \equiv \mathcal{F}(U)$. We summarize the result in the following last theorem.

\begin{teo}
Let $U$ be an open convex subset of $\R^{n}$. Then, $\mathcal{F}(U)$ is linearly isometric to $L^{1}(U;\R^{n})/V$, where $V$ is the subspace of $L^{1}(U;\R^{n})$ given by the functions with null divergence in the sense of distributions. Moreover, if $S$ is the preadjoint of $T$ and $\Psi:U\to\mathcal{C}^{\infty}_{0}(U)^{*}$ is such that
\[ \langle \Psi(x),\varphi \rangle = \varphi(x)\,,\,\forall x\in U,\varphi\in\mathcal{C}_{0}^{\infty}(U), \]
then $S\delta(x)=[f]$ if and only if $\nabla\cdot f = \Psi(0)-\Psi(x)$.
\end{teo}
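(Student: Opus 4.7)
The isometric identification is essentially already in hand. The preceding Theorem gives that $T: X \to Lip_0(U)$ is a bijective linear isometry, and Proposition~\ref{wwcont} shows $T$ is $w^*$-$w^*$-continuous when $X$ is viewed as the dual of $L^{1}(U;\R^{n})/V$ (via Proposition~\ref{dualQuo}, since $V^{\perp}=\overline{X'}^{w^{*}}=X$ by the Corollary) and $Lip_{0}(U)$ as the dual of $\mathcal{F}(U)$. Applying the general preadjoint proposition recalled in the unidimensional case therefore produces a bijective linear isometry $S: \mathcal{F}(U)\to L^{1}(U;\R^{n})/V$ with $S^{*}=T$, which is the first claim.

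For the second statement, the natural route is to unwind the adjoint relation $\langle g, S\delta(x)\rangle = \langle Tg,\delta(x)\rangle = (Tg)(x)$ for every $g\in X$. Since $X'=\{\nabla\varphi:\varphi\in\mathcal{C}_{0}^{\infty}(U)\}$ is $w^{*}$-dense in $X$ by the Corollary, and both $[f]$ and $S\delta(x)$ lie in the predual $L^{1}(U;\R^{n})/V$ (so define $w^{*}$-continuous functionals on $X$), the equality $S\delta(x)=[f]$ is equivalent to testing only on $X'$, that is to
\[ \int_{U}\langle \nabla\varphi(y),f(y)\rangle\,d\lambda(y) \;=\; (T\nabla\varphi)(x) \]
for every $\varphi\in\mathcal{C}_{0}^{\infty}(U)$.

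The right-hand side is computed directly from the definition of $T_{0}$: since $t\mapsto\varphi(tx)$ is smooth, the integrand $\langle\nabla\varphi(tx),x\rangle$ is its derivative, so $(T\nabla\varphi)(x)=\varphi(x)-\varphi(0)$. The left-hand side is exactly $-\langle \nabla\cdot f,\varphi\rangle$ by the distributional definition of the divergence. The testing condition thus reads
\[ \langle \nabla\cdot f,\varphi\rangle \;=\; \varphi(0)-\varphi(x) \;=\; \langle \Psi(0)-\Psi(x),\varphi\rangle \qquad \forall \varphi\in\mathcal{C}_{0}^{\infty}(U), \]
which is precisely $\nabla\cdot f=\Psi(0)-\Psi(x)$ in $\mathcal{D}'(U)$. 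Both implications of the equivalence are obtained simply by reading this chain of reformulations forwards and backwards.

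The only delicate point is the reduction to testing against $X'$ rather than all of $X$; this is what forces the explicit use of $w^{*}$-density established in the Corollary together with the fact that preduals act continuously in the $w^{*}$-topology. Everything else reduces to the elementary identity $(T\nabla\varphi)(x)=\varphi(x)-\varphi(0)$ and to integration by parts for test functions.
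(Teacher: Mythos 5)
Your proposal is correct and follows essentially the same route as the paper: the first claim is obtained by applying the preadjoint proposition to the $w^{*}$-$w^{*}$ continuous isometry $T$ (Proposition~\ref{wwcont}), and the second by reducing the identity $S\delta(x)=[f]$ to testing against the $w^{*}$-dense set $\{\nabla\varphi:\varphi\in\mathcal{C}_{0}^{\infty}(U)\}$, computing $(T\nabla\varphi)(x)=\varphi(x)-\varphi(0)$, and integrating by parts. You are in fact slightly more explicit than the paper about why testing on $X'$ suffices, but the argument is the same.
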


\begin{proof}
The first part is direct from Proposition~\ref{wwcont}. For the final part, let $f\in L^{1}(U;\R^{n})$ and $x\in U$. Then
\[ I\delta(x) = [f] \Leftrightarrow (\forall \varphi\in\mathcal{C}^{\infty}_{0}(U))\, \langle \nabla\varphi,I\delta(x) \rangle = \langle\nabla\varphi,f\rangle. \]
Then, let $\varphi\in\mathcal{C}^{\infty}_{0}(U)$ be any function. We see that
\[ \langle\nabla\varphi,I\delta(x)\rangle = \langle TT^{-1}(\varphi-\varphi(0)),\delta(x)\rangle = \varphi(x)-\varphi(0) = \langle\Psi(x)-\Psi(0),\varphi\rangle. \]
On the other hand
\[ \langle \nabla\varphi,f \rangle = \sum_{k=1}^{n} \int_{U} \frac{\partial\varphi}{\partial x_{k}}f_{k} d\lambda^{(n)} = -\int_{U}\varphi\sum_{k=1}^{n}\frac{\partial f_{k}}{\partial x_{k}} d\lambda^{(n)} = -\langle \nabla\cdot f,\varphi \rangle. \]
Then, we have that
\[I\delta(x)=[f] \Leftrightarrow \nabla\cdot f=\Psi(0)-\Psi(x). \]
\end{proof}

\section*{Acknowledgements}

The author would like to thank Aris Daniilidis and Anton\'in Proch\'azka for the discutions around the methods shown here for the identification of $Lip_{0}(U)$. Also mention the work done by Marek C\'uth, Ond\v{r}ej F. K. Kalenda and Petr Kaplick\'y in \cite{2016arXiv161003966C} where they obtained the same results independently using another technique to identify the space $Lip_{0}(U)$.

\bibliographystyle{plain}
\bibliography{articulo}

\end{document}